\newtheorem{thm}{Theorem}[section]
\newtheorem{lem}[thm]{Lemma}
\newtheorem{prop}[thm]{Proposition}
\newtheorem{dfn}[thm]{Definition}
\newtheorem{remark}[thm]{Remark}
\newtheorem{cor}[thm]{Corollary}
\numberwithin{equation}{section}
\newcommand{\C}{\mathbb{C}}
\newcommand{\N}{\mathbb{N}}
\newcommand{\Q}{\mathbb{Q}}
\newcommand{\Z}{\mathbb{Z}}
\newcommand{\mcO}{\mathcal{O}}
\newcommand{\lra}{\longrightarrow}
\newcommand{\ra}{\rightarrow}
\newcommand{\mrm}[1]{\mathrm{#1}}
\newcommand{\psmat}[4]{\bigl( \begin{smallmatrix} #1 & #2 \\ #3 & #4 \end{smallmatrix} \bigr)}
\title[On sign changes of $q$-exponents of GMF's]{On sign changes of $q$-exponents of generalized modular functions}
\author{Narasimha Kumar}
\date{}
\email{narasimha.kumar@iith.ac.in}
\address{
Department of Mathematics \\
Indian Institute of Technology Hyderabad \\
Ordnance Factory Estate \\
Yeddumailaram 502205 \\
Andhra Pradesh, INDIA
}
\begin{document}
\begin{abstract}
Let $f$ be a generalized modular function (GMF) of weight $0$ on $\Gamma_0(N)$
such that its $q$-exponents $c(n)$($n \in \N$) are all real, and $\mrm{div}(f) = 0$.
In this note, we show the equidistribution of signs for $c(p) (p\ \mrm{prime})$
by using equidistribution theorems for normalized cuspidal eigenforms of integral weight.
\end{abstract}
\subjclass[2010]{Primary 11F03,11F11; Secondary 11F30}
\keywords{Generalized modular forms, $q$-exponents, sign changes, equidistribution}
\maketitle

\section{Introduction}
The problem of sign changes of Fourier coefficients of integral, half-integral weight modular forms, and of $q$-exponents of generalized modular functions
(GMF) has been studied quite extensively. For some results on sign changes for integral, half-integral weight modular forms, see \cite{KKP03},\cite{Koh07},\cite{BK08},\cite{HKKL12},\cite{IW},
and for generalized modular functions, see~\cite{KM08},~\cite{KM11}. In this note, we study equidistribution of signs for a family of 
$q$-exponents of generalized modular functions.

Let $f$ be a non-zero generalized modular function of weight $0$ on $\Gamma_0(N)$ (see Section~\ref{preliminaries} for the  definition). Then $f$
has an infinite product expansion 
\begin{equation}
\label{infinite-expansion}
 f(z) = c q^h \prod_{n = 1}^{\infty} (1-q^n)^{c(n)},
\end{equation}
where the product on the right-hand side of~$\eqref{infinite-expansion}$ is convergent in a small neighborhood of $q=0$, where
$q = e^{2 \pi i z}$. Here $c$ is a non-zero constant, $h$ is the order of $f$ at infinity, and the $c(n) (n \in \N)$ are uniquely
determined complex numbers~\cite{BKO04},~\cite{ES96}. 

In~\cite{KM08}, Kohnen and Martin constructed GMF's $f$ on $\Gamma_0(N)$ with $\mrm{div}(f) = 0$ 
such that the $q$-exponents $c(n)$($n \in \N$) take infinitely many different values.
Moreover, if $c(n)$($n \in \N$) are real, then they change signs infinitely often~\cite{KM11}, 
i.e., there are infinitely many $n$ such that $c(n)>0$ and there are infinitely many $n$ 
such that $c(n)<0$.

For prime exponents, Kohnen and Meher proved that the exponents $c(p)$ $(p\ \mrm{prime})$ take infinitely many different values, 
under some assumptions, see~\cite{KM11}. However, they have not considered the problem of sign changes for the $q$-exponents $c(p) (p\ \mrm{prime})$.
So, it is a natural question to ask about sign changes for the exponents $c(p) (p\ \mrm{prime})$,
assuming that they are real. 


In this note, we show that for a non-constant GMF $f$ of weight $0$ on $\Gamma_0(N)$ with $\mrm{div}(f) = 0$,
the  exponents $c(p)$($p$ prime) change signs infinitely often. Moreover, we will also study the equidistribution of signs 
for the  exponents $c(p)$($p$ prime) by using the Sato-Tate equidistribution theorem for normalized cuspidal eigenforms without complex
multiplication (CM)~\cite{BGHT11}, and Deuring's theorem for CM elliptic curves~\cite{Deu41}.

In the last section, we prove a general statement about the integrality of $q$-exponents $c(p)$($p$ prime) of 
generalized modular functions (GMF). As a consequence, we show that these functions are poorly behaved with respect to the integrality 
of these $q$-exponents.

\section{Preliminaries and Statements of results}	
\label{preliminaries} 
In this section, we will recall the definition of generalized modular functions and some basic results about them.
We refer the reader to the fundamental article~\cite{KM03} for more details.
\begin{dfn}
We say that $f$ is a generalized modular function (GMF) of integral weight $k$ on $\Gamma_0(N)$, if $f$ is a holomorphic 
function on the upper half-plane $\mathbb{H}$ and
$$ f \left(\frac{az+b}{cz+d} \right)= \chi(\gamma) (cz+d)^k f(z) \quad \forall \gamma=\psmat{a}{b}{c}{d} \in \Gamma_0(N)$$
for some (not necessarily unitary) character $\chi:\Gamma_0(N) \ra \C^{*}$.
\end{dfn}
We will also suppose that $\chi(\gamma) = 1$ for all parabolic $\gamma \in \Gamma_0(N)$ of trace $2$. 
We remark that in~\cite{KM03}, a GMF in the above sense was called as a parabolic GMF (PGMF).

If $f$ is a GMF of weight $0$ on $\Gamma_0(N)$ with $\mrm{div}(f) =  0$,
then its logarithmic derivative
\begin{equation}
\label{log-derivative}
 g:= \frac{1}{2\pi i} \frac{f^{\prime}}{f}
\end{equation}
is a cusp form of weight $2$ on $\Gamma_0(N)$ with trivial character (cf. loc. cit.). Conversely, if one starts with such a cusp form $g$, then there
exists a GMF $f$ of weight $0$ on $\Gamma_0(N)$ such that~\eqref{log-derivative} is satisfied and $f$ is uniquely determined up to a non-zero scalar~\cite{KM03}. 
Now, we will make some remarks. 
\begin{remark}
The cardinality of $\Gamma_0(N)$-inequivalent zeros of $f^{\prime}$, the derivative of $f$, depends on whether the weight $k=0$ or not. More precisely:
\begin{itemize}
 \item $f^{\prime}$ has only finitely many $\Gamma_0(N)$-inequivalent zeros if $k=0$ $($by~\eqref{log-derivative}$)$, where as
\item  $f^{\prime}$ has infinitely many $\Gamma_0(N)$-inequivalent zeros if $k>0$~\cite[Remark $5.4$]{SS12}.  
\end{itemize}
 \end{remark}

Suppose the Fourier expansion of $g(z)$ is given by
\begin{equation*}
g(z) = \sum_{n = 1}^{\infty} b(n) q^n.
\end{equation*}

\begin{remark}
\label{key-equation}
The field generated by the Fourier coefficients  of $g$ over $\Q$ is equal to the field generated by the
$q$-exponents of $f$ over $\Q$, where $f,g$ are related by~\eqref{log-derivative}. This follows from the equalities
$b(n) = -{\sum}_{d|n} dc(d)$ and $nc(n)=- {\sum}_{d|n}\mu(d) b(n/d)$ $(n \geq 1)$.
\end{remark}

Throughout this note, we shall assume that the modular form $g$ is a normalized cuspidal eigenform, and $c(p)(p\ \mrm{prime})$ are real numbers. 
Let $\mathbb{P}$ denote the set of all prime numbers and $p$ always denote a prime number, unless explicitly stated otherwise. 
We recall the notion of natural density for subsets of $\mathbb{P}$.
\begin{dfn}
Let $S$ be a subset of  $\mathbb{P}$. The set $S$ has  natural density $d(S)$
if the limit
\begin{equation}
\underset{x \ra \infty}{\mrm{lim}}\  \frac{\# \{ p \leq x: p\in S\}}{\# \{ p \leq x: p\in \mathbb{P}\}}
\end{equation}
exists and is equal to $d(S)$. 
\end{dfn}
\begin{remark}
$d(S)=0$ if $|S|<\infty$.
\end{remark}
For notational convenience, we let $\mathbb{P}_{<0}$ denote 
the set $\{ p \in \mathbb{P}: p\nmid N,\ c(p)<0 \}$, and similarly $\mathbb{P}_{>0}$, $\mathbb{P}_{\leq 0}$, $\mathbb{P}_{\geq 0}$, 
and $\mathbb{P}_{=0}$. Now we can state the main results of this note. 
\begin{thm}[Non-CM case]
\label{main-thm}
Let $f$ be a non-constant GMF of weight $0$ on $\Gamma_0(N)$ with $div(f) = 0$.
Suppose that $g$ $($as in~\eqref{log-derivative}$)$ is a normalized Hecke eigenform without 
complex multiplication (CM). Then the exponents $c(p)$ $(p$\ prime$)$ change signs infinitely often.
Moreover, the sets $$\mathbb{P}_{>0}, \mathbb{P}_{<0}, \mathbb{P}_{\geq 0}, \mathbb{P}_{\leq 0}$$
have natural density $1/2$,
and $d(\mathbb{P}_{=0}) = 0$.
\end{thm}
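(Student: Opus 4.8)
The plan is to transfer the sign-change and equidistribution question for the exponents $c(p)$ to the well-understood arithmetic of the Fourier coefficients $b(n)$ of the weight $2$ eigenform $g$, using the inversion formula recorded in Remark~\ref{key-equation}. Specializing $nc(n) = -\sum_{d \mid n} \mu(d) b(n/d)$ to $n = p$ prime gives the clean relation $pc(p) = -\bigl(b(p) - b(1)\bigr) = -b(p) + 1$, since $g$ is normalized so $b(1) = 1$. Hence $c(p) > 0$ if and only if $b(p) < 1$, $c(p) < 0$ if and only if $b(p) > 1$, and $c(p) = 0$ if and only if $b(p) = 1$. So the entire theorem reduces to understanding the distribution of $b(p)$ relative to the threshold $1$, for $p \nmid N$.

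Next I would normalize: write $a_p := b(p)/p^{1/2} \in [-2,2]$ for $p \nmid N$, which is the quantity governed by the Sato–Tate law. Since $g$ has no CM, the theorem of Barnet-Lamb–Geraghty–Harris–Taylor~\cite{BGHT11} gives that the $a_p$ are equidistributed in $[-2,2]$ with respect to the Sato–Tate measure $\frac{1}{2\pi}\sqrt{4-t^2}\,dt$. The condition $b(p) < 1$ becomes $a_p < p^{-1/2}$; for all but finitely many $p$ (those with $p \le 4$, say, together with $p \mid N$) this is implied by $a_p \le 0$ and implies $a_p < $ any fixed positive $\varepsilon$. A squeezing argument then shows that $d(\mathbb{P}_{<0}^{\,c}\text{-type sets})$ equals the Sato–Tate measure of $(0,2]$ on one side and of $[0,2]$ on the other, both of which equal $1/2$ because the Sato–Tate density is symmetric about $t=0$ and is atomless. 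Symmetrically the set where $b(p) > 1$ has density $1/2$, and the set where $b(p) = 1$ — contained in the set where $a_p$ lies in the shrinking interval $\{p^{-1/2}\}$, a single point for each $p$ — has density $0$ since any fixed interval $(-\varepsilon,\varepsilon)$ has positive Sato–Tate measure tending to $0$ with $\varepsilon$. This simultaneously yields $d(\mathbb{P}_{>0}) = d(\mathbb{P}_{<0}) = d(\mathbb{P}_{\ge 0}) = d(\mathbb{P}_{\le 0}) = 1/2$ and $d(\mathbb{P}_{=0}) = 0$, and in particular both $\mathbb{P}_{>0}$ and $\mathbb{P}_{<0}$ are infinite, giving infinitely many sign changes of $c(p)$.

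The main technical point to handle carefully is the interchange between the \emph{sharp} threshold $b(p) = 1$ (equivalently $a_p = p^{-1/2}$, a moving target) and the \emph{fixed} thresholds that Sato–Tate can see. The clean way is: for any fixed $\varepsilon>0$, for $p$ large enough $p^{-1/2} < \varepsilon$, so $\{p : a_p \le 0\} \setminus (\text{finite}) \subseteq \mathbb{P}_{>0} \subseteq \{p : a_p < \varepsilon\} \cup (\text{finite})$; taking densities and then letting $\varepsilon \to 0^+$ pins down $d(\mathbb{P}_{>0})$ between $\mu_{ST}([-2,0])$ and $\mu_{ST}([-2,\varepsilon)) \to \mu_{ST}([-2,0])$, forcing equality to $\mu_{ST}([-2,0]) = 1/2$. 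One must also note that finitely many primes (including those dividing $N$, which are excluded from the $\mathbb{P}_\bullet$ sets anyway, and the finitely many small primes where $p^{1/2} \le 2$ so that $a_p \le 0$ need not force $b(p) < 1$) do not affect natural density. Everything else is bookkeeping; I do not anticipate a genuine obstacle, as the CM case — where $a_p = 0$ for a density-$1/2$ set of primes by Deuring's theorem~\cite{Deu41} and the remaining $a_p$ are equidistributed for a different measure symmetric about $0$ — is handled by the same comparison and is presumably the content of the companion theorem.
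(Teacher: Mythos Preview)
Your proposal is correct and follows essentially the same route as the paper: both reduce to the identity $b(p)=1-pc(p)$, translate the sign of $c(p)$ into the position of the normalized Hecke eigenvalue relative to the moving threshold $p^{-1/2}$ (the paper uses $B_1(p)=b(p)/2\sqrt{p}$ and threshold $1/2\sqrt{p}$, which is the same up to scaling), and then run exactly the $\varepsilon$-squeezing argument against Sato--Tate that you describe to pin the density at $\mu_{\mathrm{ST}}([-1,0])=1/2$. One trivial remark: your caution about ``small primes where $p^{1/2}\le 2$'' is unnecessary for the lower inclusion, since $a_p\le 0$ forces $b(p)\le 0<1$ regardless of $p$; only the upper inclusion needs the finite exceptional set $\{p\le 1/\varepsilon^2\}$.
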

We will also prove a stronger version of this theorem (cf. Theorem~\ref{main-thm-finer} in the text)
by using a hybrid Chebotarev-Sato-Tate theorem~\cite[Chap. 12]{MM12}. 

The above theorem can be thought of as an analogue, for generalized modular functions, of Theorem $5.1$ in~\cite{IW}, where Inam and Wiese have studied 
equidistribution of signs for some families of coefficients of half-integral weight modular eigenforms,
under the assumption that their Shimura lifts are cuspidal eigenforms of non-CM type. 

In the CM case, Prof. G. Wiese had kindly informed us that, he and his collaborators are working on extending the results in~\cite{IW} 
to the CM Shimura lifts as well. However, in our situation, we are able to study the sign changes of $q$-exponents $c(p)$ ($p$ prime), 
when $g$ (as in~\eqref{log-derivative}) is a CM form. More precisely:

\begin{thm}[CM case]
\label{main-thm-2}
Let $f$ be a non-constant GMF of weight $0$ on $\Gamma_0(N)$ with $div(f) = 0$. 
Suppose that the normalized Hecke eigenform $g\ ($as in~\eqref{log-derivative}$)$
corresponds to a CM elliptic curve $E$ over $\Q$. 
Then the exponents $c(p)(p$ prime$)$ change signs infinitely often. 
Moreover, the sets $$\mathbb{P}_{>0}, \mathbb{P}_{< 0}$$
have natural density $\frac{3}{4}$, $\frac{1}{4}$, respectively, 
and $d(\mathbb{P}_{=0}) = 0$.
\end{thm}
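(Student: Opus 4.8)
\emph{Proof proposal.} The idea is to convert statements about the signs of the $c(p)$ into statements about the Hecke eigenvalues of $g$, and then feed the latter into Deuring's equidistribution theorem for CM elliptic curves. First I would use the Möbius relation recorded in Remark~\ref{key-equation}. Taking $n=1$ in $n\,c(n)=-\sum_{d\mid n}\mu(d)\,b(n/d)$ gives $c(1)=-b(1)=-1$, since $g$ is a normalized eigenform; taking $n=p$ with $p\nmid N$ gives $p\,c(p)=b(1)-b(p)=1-b(p)$. Since $g$ corresponds to $E$ we have $b(p)=a_p(E)\in\Z$ and $|a_p(E)|\le 2\sqrt{p}$ for $p\nmid N$, so $c(p)=\dfrac{1-a_p(E)}{p}$ and therefore $\mrm{sign}\,c(p)=\mrm{sign}\bigl(1-a_p(E)\bigr)$. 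Because $a_p(E)$ is an integer this yields
\[
 \mathbb{P}_{>0}=\{p\nmid N:\ a_p(E)\le 0\},\quad \mathbb{P}_{<0}=\{p\nmid N:\ a_p(E)\ge 2\},\quad \mathbb{P}_{=0}=\{p\nmid N:\ a_p(E)=1\};
\]
in particular the standing reality assumption on the $c(p)$ is automatic in this case.

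Next I would invoke the arithmetic of the CM field. Let $K$ be the imaginary quadratic field of complex multiplication of $E$, with associated Hecke character $\psi$ (so $L(E/\Q,s)=L(\psi,s)$). Discarding the finitely many primes dividing $N$ or ramifying in $K$, each remaining prime $p$ is either inert in $K$ — a set of natural density $1/2$ by Chebotarev, on which $E$ has supersingular reduction and $a_p(E)=0$ — or split, $p=\mfp\overline{\mfp}$ — a set of density $1/2$, on which $a_p(E)=\psi(\mfp)+\overline{\psi(\mfp)}=2\sqrt{p}\cos\theta_p$ for a unique $\theta_p\in[0,\pi]$. For split $p$ one has $a_p(E)\neq 0$ (else $\psi(\mfp)^2=-p$, forcing $p$ to ramify in $K$), so $\theta_p\neq \pi/2$. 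The analytic input is Deuring's theorem \cite{Deu41}: as $p$ ranges over the primes split in $K$, the angles $\theta_p$ are equidistributed in $[0,\pi]$ with respect to the uniform measure $\frac{1}{\pi}\,d\theta$.

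Now I would read off the densities. On the inert primes $a_p(E)=0$, and among the split primes $a_p(E)\le 0\iff\theta_p\in(\pi/2,\pi]$ (proportion $1/2$) while $a_p(E)>0\iff\theta_p\in[0,\pi/2)$ (proportion $1/2$); hence by equidistribution $d(\{p\nmid N:a_p(E)\le 0\})=\frac12+\frac12\cdot\frac12=\frac34$ and $d(\{p\nmid N:a_p(E)>0\})=\frac12\cdot\frac12=\frac14$, which already gives $d(\mathbb{P}_{>0})=\frac34$. To handle $\mathbb{P}_{=0}$: no inert prime has $a_p(E)=1$, and a split prime has $a_p(E)=1$ only when $\cos\theta_p=\frac{1}{2\sqrt p}$, which forces $\theta_p\to (\pi/2)^-$ as $p\to\infty$; so for each $\varepsilon>0$ all but finitely many primes in $\mathbb{P}_{=0}$ have $\theta_p\in(\pi/2-\varepsilon,\pi/2)$, and equidistribution bounds the upper density of $\mathbb{P}_{=0}$ by $\frac12\cdot\frac{\varepsilon}{\pi}$. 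Letting $\varepsilon\to 0$ gives $d(\mathbb{P}_{=0})=0$, hence $d(\mathbb{P}_{<0})=d(\{p\nmid N:a_p(E)>0\})-d(\mathbb{P}_{=0})=\frac14$. Since $\mathbb{P}_{>0}$ and $\mathbb{P}_{<0}$ have positive density they are infinite, so the $c(p)$ change sign infinitely often. (Alternatively, $d(\mathbb{P}_{=0})=0$ follows from the classical bound $\#\{p\le x:a_p(E)=1\}\ll_E\sqrt{x}$, obtained by writing $4p=1+|d_K|\,s^2$.)

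The only genuinely non-formal ingredient is the equidistribution of the split-prime angles $\theta_p$, which is classical (Hecke/Deuring) and is exactly what the statement of Theorem~\ref{main-thm-2} references; the sole point requiring a separate short argument is $d(\mathbb{P}_{=0})=0$, which does not follow from equidistribution on intervals alone. I expect everything else to be routine bookkeeping around the inert/split dichotomy, so I do not anticipate a serious obstacle.
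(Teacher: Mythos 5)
Your proposal is correct and follows essentially the same route as the paper: both reduce the sign of $c(p)$ to the comparison of $b(p)=a_p(E)$ with $1$ via $b(p)=1-pc(p)$, and then combine Deuring's CM equidistribution theorem with the Chebotarev density $1/2$ of the inert primes (where $a_p(E)=0$, hence $c(p)=1/p>0$). The only organizational differences are that you state Deuring's theorem via the angles $\theta_p$ over split primes and use the integrality of $a_p(E)$ to pin down $\mathbb{P}_{>0}$, $\mathbb{P}_{<0}$, $\mathbb{P}_{=0}$ exactly, proving $d(\mathbb{P}_{=0})=0$ directly, whereas the paper keeps the threshold $1/(2\sqrt{p})$ and runs the same $\epsilon$-sandwich (limsup/liminf) argument as in its non-CM case, obtaining $d(\mathbb{P}_{=0})=0$ from the densities $3/4$ and $1/4$ exhausting all primes.
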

For the proof of Theorem~\ref{main-thm-2}, we use Deuring's equidistribution theorem for CM elliptic curves over $\Q$.
It is widely believed that a similar equidistribution theorem also holds for CM modular forms, but we could not locate a 
reference for it in the literature. Hence, we stated the theorem only for CM elliptic curves, otherwise the rest of the proof is same in both cases.

Finally, observe that the natural density of the set $\mathbb{P}_{<0}$ (or $\mathbb{P}_{>0}$) depends on whether $g$ has CM or not.
In \S\ref{section-main-thm} and \S\ref{section-main-thm-2}, we prove Theorem~\ref{main-thm} and Theorem~\ref{main-thm-2}, respectively.

\section{Proof of Theorem~\ref{main-thm}}
\label{section-main-thm}
In this section, we shall prove Theorem~\ref{main-thm} by using the Sato-Tate equidistribution for normalized cuspidal eigenforms of integral weight. Before we proceed to the proof, let us recall the Sato-Tate measure 
and the Sato-Tate equidistribution theorem~\cite{BGHT11}. 

\begin{dfn}
The Sato-Tate measure $\mu_{\mrm{ST}}$ is the probability measure on  $[-1,1]$ given by $\frac{2}{\pi} \sqrt{1-t^2} dt$. 
\end{dfn}

Let $g = \sum_{n=1}^{\infty} b(n)q^n$ be a normalized cuspidal eigenform of weight $2k$ on $\Gamma_0(N)$.
By Deligne's bound, one has 
$$ |b(p)| \leq 2 p^{k-\frac{1}{2}}, $$
and we let 
\begin{equation}
\label{Sato-Tate-normalization}
B_k(p):=\frac{b(p)}{2p^{k-\frac{1}{2}}} \in [-1,1].
\end{equation}
By~\cite[Thm. B.]{BGHT11}, we have the following Sato-Tate equidistribution theorem for the cuspidal eigenform $g$. 
\begin{thm}[Barnet-Lamb, Geraghty, Harris, Taylor]
\label{Sato-Tate-thm}
Let $k \geq 1$ and let $g$ be a normalized cuspidal eigenform of weight $2k$ on $\Gamma_0(N)$
without complex multiplication. Then the numbers $\{ B_k(p) \}_{\{p \nmid N\}}$ are equidistributed
in $[-1,1]$ with respect to the Sato-Tate measure $\mu_{\mrm{ST}}$.
\end{thm}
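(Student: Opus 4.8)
The plan is to deduce the equidistribution from the Weyl criterion, reducing it to analytic properties of the symmetric power $L$-functions attached to $g$. Writing $B_k(p) = \cos\theta_p$ with $\theta_p \in [0,\pi]$, I would expand polynomials in $\cos\theta$ in the orthonormal basis $\{U_m(\cos\theta)\}_{m\ge 0}$ of Chebyshev polynomials of the second kind; these are precisely the characters of the irreducible representations of $\mathrm{SU}(2)$, and $\int_{-1}^{1} U_m\, d\mu_{\mathrm{ST}} = \delta_{m,0}$. Hence it suffices to show, for every $m \ge 1$,
$$\sum_{p \le x,\ p\nmid N} U_m(\cos\theta_p) = o\!\left(\frac{x}{\log x}\right),$$
and by a standard Tauberian argument (Wiener--Ikehara) this follows once one knows that
$$L(s,\mathrm{Sym}^m g) = \prod_{p\nmid N}\ \prod_{j=0}^{m}\bigl(1 - e^{i(m-2j)\theta_p}\,p^{-s}\bigr)^{-1},$$
normalized so that the edge of the critical strip is $\mathrm{Re}(s)=1$, extends to a function that is holomorphic and non-vanishing on the line $\mathrm{Re}(s)=1$.

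The substantive input is therefore the analytic behaviour of $L(s,\mathrm{Sym}^m g)$ for all $m$, and I would obtain it from \emph{potential automorphy}. Using the Taylor--Wiles--Kisin automorphy lifting method together with a suitable geometric family (of Dwork/Calabi--Yau hypersurface type) whose cohomology realizes the symmetric power Galois representations $\mathrm{Sym}^m \rho_g$, one shows that after base change to an appropriate totally real Galois extension $F/\Q$ the representation $\mathrm{Sym}^m \rho_g|_{G_F}$ is automorphic, corresponding to a cuspidal automorphic representation of $\GL_{m+1}(\A_F)$ (cuspidality using that $\mathrm{Sym}^m\rho_g$ is irreducible). One then descends by Brauer induction: $L(s,\mathrm{Sym}^m g)$ is expressed as an alternating product of automorphic $L$-functions over intermediate fields, each entire (or a shift of a Dedekind zeta function in the trivial case), and the classical bounds of Jacquet--Shalika and Shahidi give non-vanishing on $\mathrm{Re}(s)=1$; this forces the same for $L(s,\mathrm{Sym}^m g)$, completing the Weyl-criterion input and hence the theorem.

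The main obstacle is the potential automorphy step, which is the heart of \cite{BGHT11}. For each $m$ one must exhibit a compatible system over a totally real field carrying $\mathrm{Sym}^m$ as a piece of its cohomology with the correct Hodge--Tate weights and possessing a residually automorphic member; producing this requires a careful choice of auxiliary variety and an application of a Moret--Bailly-type theorem to find a point over a totally real field at which the residual representation is simultaneously modular and has image large enough to run the Taylor--Wiles patching argument. One then needs automorphy lifting theorems flexible enough to handle the relevant local deformation conditions (ordinary versus Fontaine--Laffaille/crystalline, varying residue characteristic $\ell$, and possibly small image at $\ell$) --- exactly where the technical work of that paper lies. By contrast, the Weyl criterion, the Chebyshev expansion, the Tauberian passage, and the Brauer-induction descent are all routine once these deep inputs are available.
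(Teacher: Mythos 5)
The paper does not prove this theorem at all---it is imported verbatim from \cite{BGHT11} (Theorem B there)---and your sketch correctly reproduces the strategy of that reference: Serre's form of the Weyl criterion using Chebyshev polynomials of the second kind as $\mathrm{SU}(2)$-characters, reduction to holomorphy and non-vanishing of $L(s,\mathrm{Sym}^m g)$ on $\mathrm{Re}(s)=1$ for all $m\geq 1$, and the supply of these analytic properties via potential automorphy of the symmetric powers, Brauer induction descent, and the Jacquet--Shalika/Shahidi non-vanishing bounds. So your proposal is correct in outline and follows essentially the same route as the cited source; the one point to keep in view is that the potential automorphy step you rightly flag as the heart of the argument is precisely the content of \cite{BGHT11} itself, so in the context of this paper the theorem is a black-box citation rather than something to be re-derived.
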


\begin{cor}
\label{Sato-Tate-cor}
Let $g$ be as in the above Theorem. For any subinterval $I \subseteq [-1,1]$, we have 
$$ \underset{x \ra \infty}{\mathrm{lim}}  \ \frac{\pi_I(x)}{\pi(x)} = \mu_{\mrm{ST}}(I) = 
\frac{2}{\pi} \int_I  \sqrt{1-t^2}  dt,$$
where $\pi_I(x):= \# \left\lbrace p\leq x: p\nmid N,B_k(p) \in I \right\rbrace$.
\end{cor}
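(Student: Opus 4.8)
The plan is to deduce the corollary from Theorem~\ref{Sato-Tate-thm} by unwinding the definition of equidistribution with respect to a measure. Recall that the assertion that the sequence $\{B_k(p)\}_{p\nmid N}$, listed in order of increasing $p$, is equidistributed in $[-1,1]$ with respect to $\mu_{\mrm{ST}}$ means precisely that for every continuous function $\phi \colon [-1,1]\to\R$,
\[
\lim_{x\to\infty}\frac{1}{\#\{p\le x : p\nmid N\}}\sum_{\substack{p\le x\\ p\nmid N}}\phi\bigl(B_k(p)\bigr) = \int_{-1}^{1}\phi(t)\,d\mu_{\mrm{ST}}(t).
\]
First I would invoke the standard equivalence (a form of the Portmanteau theorem: approximate the indicator $\mathbf{1}_I$ from above and below by continuous functions) which says that the displayed test-function condition is equivalent to
\[
\frac{\#\{p\le x : p\nmid N,\ B_k(p)\in I\}}{\#\{p\le x : p\nmid N\}}\ \longrightarrow\ \mu_{\mrm{ST}}(I)
\]
for every subinterval $I\subseteq[-1,1]$ with $\mu_{\mrm{ST}}(\partial I)=0$.

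Next I would remove the restriction on $I$: the measure $\mu_{\mrm{ST}}=\frac{2}{\pi}\sqrt{1-t^2}\,dt$ is absolutely continuous with respect to Lebesgue measure, hence non-atomic, so $\mu_{\mrm{ST}}(\partial I)=0$ automatically for every interval $I$, its boundary consisting of at most two points. Thus the convergence above holds for \emph{every} subinterval $I\subseteq[-1,1]$, without restriction. Finally, since $\{p : p\mid N\}$ is finite and $\pi(x)\to\infty$, one has $\#\{p\le x : p\nmid N\}=\pi(x)-O(1)$, so replacing the denominator by $\pi(x)$ affects neither the existence nor the value of the limit; this yields $\lim_{x\to\infty}\pi_I(x)/\pi(x)=\mu_{\mrm{ST}}(I)=\frac{2}{\pi}\int_I\sqrt{1-t^2}\,dt$, as claimed.

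The only step with any genuine content is the first equivalence — passing from the continuous–test-function formulation of equidistribution to the indicator-function (counting) formulation — and that is exactly the place where the non-atomicity of $\mu_{\mrm{ST}}$ is used; everything else is bookkeeping. I do not expect any real obstacle here, since Theorem~\ref{Sato-Tate-thm} does all the analytic work; the corollary is essentially a reformulation tailored to the density computations carried out in the proofs of Theorems~\ref{main-thm} and~\ref{main-thm-2}.
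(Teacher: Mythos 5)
Your argument is correct and is exactly the standard unwinding that the paper relies on implicitly: the paper states the corollary without proof as an immediate consequence of Theorem~\ref{Sato-Tate-thm}, and your steps (continuous test functions, the boundary-measure-zero criterion, non-atomicity of $\mu_{\mrm{ST}}$, and discarding the finitely many primes dividing $N$ to replace the denominator by $\pi(x)$) are precisely what that implicit deduction requires.
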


We let $\pi_{<0}(x)$ denote the number 
$ \# \{p\leq x: p \in \mathbb{P}_{<0}\}$, and similarly for 
$\pi_{>0}(x), \pi_{\leq 0}(x)$, $\pi_{\geq 0}(x)$, and $\pi_{=0}(x)$. 

Now we start the proof of Theorem~\ref{main-thm}. Since $g$ is the logarithmic derivative of $f$, we have: 
$$ b(n) = - \sum_{d|n} dc(d) \quad (n\geq 1). $$
Since $c(1)=-1$, we have $b(p) = 1-pc(p).$
Therefore,
$ \frac{b(p)}{2\sqrt{p}} =  \frac{1}{{2\sqrt{p}}}-\frac{c(p)\sqrt{p}}{2}$ and hence
\begin{equation*}
c(p)>0 \Longleftrightarrow  -1 \leq B_1(p) < \frac{1}{2 \sqrt{p}}, \quad c(p)<0 \Longleftrightarrow \frac{1}{2 \sqrt{p}} < B_1(p) \leq 1, 
\end{equation*}
where $B_1(p) = \frac{b(p)}{2\sqrt{p}}$ by definition (see~\eqref{Sato-Tate-normalization}).

\begin{lem}
\label{lem-main-thm}
$\underset{x \ra \infty}{\mrm{lim\  sup}} 
\ \frac{\pi_{< 0}(x)}{\pi(x)} \leq \mu_{\mrm{ST}}([0,1]) = \frac{1}{2}.$
Similarly,  $\underset{x \ra \infty}{\mrm{lim\  sup}} \ \frac{\pi_{\leq 0}(x)}{\pi(x)} \leq \frac{1}{2}.$
\end{lem}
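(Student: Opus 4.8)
The strategy is to exploit the equivalence just established, namely that $c(p) < 0 \iff B_1(p) > \frac{1}{2\sqrt p}$, and to compare the set $\mathbb{P}_{<0}$ with a fixed interval not depending on $p$. First I would fix an arbitrary $\varepsilon > 0$ and choose $x_0$ large enough that $\frac{1}{2\sqrt p} < \varepsilon$ for all $p > x_0$. For such primes, $c(p) < 0$ forces $B_1(p) \in (\varepsilon, 1] \subseteq [\varepsilon, 1]$; hence
\[
\pi_{<0}(x) \;\le\; \#\{p \le x_0\} \;+\; \#\bigl\{p \le x : p \nmid N,\ B_1(p) \in [\varepsilon,1]\bigr\}.
\]
Dividing by $\pi(x)$, letting $x \to \infty$, and invoking Corollary~\ref{Sato-Tate-cor} (the hypothesis that $g$ is a non-CM normalized eigenform of weight $2$ is exactly what licenses this), the first term contributes $0$ to the limit, so
\[
\underset{x\to\infty}{\mrm{lim\ sup}}\ \frac{\pi_{<0}(x)}{\pi(x)} \;\le\; \mu_{\mrm{ST}}([\varepsilon,1]).
\]
Finally I would let $\varepsilon \to 0^+$; since $\mu_{\mrm{ST}}$ is absolutely continuous with respect to Lebesgue measure, $\mu_{\mrm{ST}}([\varepsilon,1]) \to \mu_{\mrm{ST}}([0,1]) = \frac{2}{\pi}\int_0^1 \sqrt{1-t^2}\,dt = \frac{1}{2}$, giving the claimed inequality.

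For the second assertion, the argument is essentially identical: $c(p) \le 0 \iff B_1(p) \ge \frac{1}{2\sqrt p}$, so for $p > x_0$ one has $B_1(p) \in [\varepsilon, 1]$ again (note $\varepsilon > 0 > \tfrac{1}{2\sqrt p}$ fails, but $B_1(p) \ge \tfrac{1}{2\sqrt p}$ with $\tfrac{1}{2\sqrt p} < \varepsilon$ does not immediately give $B_1(p) \ge \varepsilon$ — so instead I would note $c(p)\le 0 \iff B_1(p) \ge \tfrac{1}{2\sqrt p} \ge 0 > -\varepsilon$ is useless, and rather use that $\pi_{\le 0}(x) \le \pi_{<0}(x) + \pi_{=0}(x)$ and that $\pi_{=0}(x)/\pi(x) \to 0$). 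Cleaner: $c(p) \le 0$ and $p > x_0$ imply $B_1(p) \ge \tfrac{1}{2\sqrt p} > 0$, hence $B_1(p) \in (0,1] \subseteq [\varepsilon,1] \cup (0,\varepsilon)$; bounding $\#\{p\le x: B_1(p)\in(0,\varepsilon)\}$ by $\#\{p \le x : B_1(p) \in [0,\varepsilon]\}$ and applying Corollary~\ref{Sato-Tate-cor} to the interval $[0,\varepsilon]$, one gets the same bound $\mu_{\mrm{ST}}([0,1])$ after letting $\varepsilon \to 0$. I expect the only delicate point to be this bookkeeping near the moving threshold $\tfrac{1}{2\sqrt p}$: one must be careful that the exceptional primes $p \le x_0$ and the sliver where $B_1(p)$ lies between $0$ and $\varepsilon$ are both negligible in density, which is where the absolute continuity of $\mu_{\mrm{ST}}$ and the density-zero remark for finite sets do the work.
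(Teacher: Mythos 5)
Your treatment of the first assertion contains a step that is false as written: from $c(p)<0 \iff B_1(p) > \tfrac{1}{2\sqrt p}$ and $\tfrac{1}{2\sqrt p} < \varepsilon$ you conclude that $c(p)<0$ ``forces'' $B_1(p) \in (\varepsilon,1]$, but the implication goes the wrong way --- a prime with $B_1(p) \in \bigl(\tfrac{1}{2\sqrt p}, \varepsilon\bigr]$ has $c(p)<0$ and yet is not counted in $\{B_1(p)\in[\varepsilon,1]\}$. (The threshold comparison $\tfrac{1}{2\sqrt p}<\varepsilon$ is useful in the opposite direction, namely $B_1(p)\ge\varepsilon \Rightarrow c(p)<0$, which is exactly how the paper uses it for the \emph{lim inf} bound in Proposition~\ref{prop-main-thm}; you have transplanted that inclusion into the lim sup argument, where it is reversed.) Consequently your intermediate inequality $\underset{x \ra \infty}{\mrm{lim\ sup}}\ \pi_{<0}(x)/\pi(x) \le \mu_{\mrm{ST}}([\varepsilon,1])$ is not only unjustified but in fact false for any fixed $\varepsilon>0$, since the density of $\mathbb{P}_{<0}$ is $\tfrac12 > \mu_{\mrm{ST}}([\varepsilon,1])$ by the very theorem this lemma feeds into; that letting $\varepsilon\to 0^+$ lands on the correct number does not repair the derivation.

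The repair is the mechanism you yourself deploy for the second assertion, and it is precisely the paper's (one-line) proof: $c(p)<0$ (respectively $c(p)\le 0$) gives $B_1(p) \ge \tfrac{1}{2\sqrt p} > 0$, hence $B_1(p)\in[0,1]$, so $\pi_{<0}(x) \le \#\{p\le x: p\nmid N,\ B_1(p)\in[0,1]\}$; dividing by $\pi(x)$ and applying Corollary~\ref{Sato-Tate-cor} to the single interval $[0,1]$ yields the bound $\mu_{\mrm{ST}}([0,1])=\tfrac12$ directly, with no $\varepsilon$, no exceptional set of small primes, and no limit in $\varepsilon$. Your covering $(0,1]\subseteq[0,\varepsilon]\cup[\varepsilon,1]$ in the second half is correct (the two applications of Corollary~\ref{Sato-Tate-cor} give $\mu_{\mrm{ST}}([0,\varepsilon])+\mu_{\mrm{ST}}([\varepsilon,1])=\tfrac12$), but it is redundant bookkeeping for an upper bound; simply replace $[\varepsilon,1]$ by $[0,1]$ in the first half and both assertions follow at once.
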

\begin{proof}
We have the following inequality $$\# \{p\leq x: p \nmid N,\ c(p)<0 \} \leq \# \{ p \leq x: p \nmid N,\ B_1(p) \in [0,1] \}.$$
Now divide the above inequality by $\pi(x)$ and make $x$ tends to $\infty$, then the lemma follows from Corollary~\ref{Sato-Tate-cor}.
\end{proof} 

\begin{prop}
\label{prop-main-thm}
 $\underset{x \ra \infty}{\mrm{lim\ inf}} \ \frac{\pi_{< 0}(x)}{\pi(x)} \geq \mu_{\mrm{ST}}([0,1])=\frac{1}{2}.$
Similarly, $\underset{x \ra \infty}{\mrm{lim\ inf}} \ \frac{\pi_{\leq 0}(x)}{\pi(x)} \geq \frac{1}{2}.$ 
\end{prop}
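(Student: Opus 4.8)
The plan is to complement Lemma \ref{lem-main-thm} by bounding $\pi_{<0}(x)$ from below, using the equivalence $c(p)<0 \Longleftrightarrow \frac{1}{2\sqrt p} < B_1(p) \le 1$ established above. The key observation is that although the threshold $\frac{1}{2\sqrt p}$ varies with $p$, it tends to $0$; so for any fixed $\epsilon>0$, once $p$ is large enough that $\frac{1}{2\sqrt p} < \epsilon$, the condition $B_1(p) \in (\epsilon, 1]$ implies $c(p)<0$. First I would fix $\epsilon>0$ and let $p_0 = p_0(\epsilon)$ be such that $\frac{1}{2\sqrt p} < \epsilon$ for all $p > p_0$. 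Then
\begin{equation*}
\# \{ p \le x : p \nmid N,\ B_1(p) \in (\epsilon, 1] \} - \pi(p_0) \le \# \{ p \le x : p \nmid N,\ c(p) < 0 \}.
\end{equation*}

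Next I would divide by $\pi(x)$ and take $\liminf$ as $x \to \infty$. Since $p_0$ is a constant, the term $\pi(p_0)/\pi(x) \to 0$, and by Corollary \ref{Sato-Tate-cor} the first term converges to $\mu_{\mrm{ST}}((\epsilon,1]) = \frac{2}{\pi}\int_\epsilon^1 \sqrt{1-t^2}\,dt$. This yields
\begin{equation*}
\underset{x \ra \infty}{\mrm{lim\ inf}}\ \frac{\pi_{<0}(x)}{\pi(x)} \ge \mu_{\mrm{ST}}((\epsilon,1]).
\end{equation*}
Finally, letting $\epsilon \to 0^+$ and using continuity of the measure (dominated convergence, or simply that $\int_0^\epsilon \sqrt{1-t^2}\,dt \to 0$), the right-hand side tends to $\mu_{\mrm{ST}}([0,1]) = \frac{1}{2}$, giving the claimed bound. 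The statement for $\pi_{\le 0}(x)$ follows the same way, replacing the strict inequality $c(p)<0$ with $c(p)\le 0$, which corresponds to $B_1(p) \ge \frac{1}{2\sqrt p}$; the argument is unchanged since $\pi_{\le 0}(x) \ge \pi_{<0}(x)$ anyway, so the lower bound is immediate from the first part.

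I do not anticipate a serious obstacle here: the only mild subtlety is handling the $p$-dependent threshold, which is dispatched by the two-parameter $\epsilon$–$p_0$ argument above, and one must be slightly careful that the finitely many small primes (and the primes dividing $N$) contribute nothing to the density. Combined with Lemma \ref{lem-main-thm}, this Proposition will show $d(\mathbb{P}_{<0}) = d(\mathbb{P}_{\le 0}) = \frac{1}{2}$; symmetric arguments with the interval $[-1,0]$ (or taking complements) will then give $d(\mathbb{P}_{>0}) = d(\mathbb{P}_{\ge 0}) = \frac{1}{2}$ and $d(\mathbb{P}_{=0}) = 0$, completing the proof of Theorem \ref{main-thm}.
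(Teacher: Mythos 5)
Your proposal is correct and follows essentially the same route as the paper: the paper fixes $\epsilon>0$, notes that $\{p\le x: p\nmid N,\ c(p)<0\}$ contains $\{p\le x: p\nmid N,\ p>\tfrac{1}{4\epsilon^{2}},\ B_1(p)\in[\epsilon,1]\}$ (your $p_0(\epsilon)$ is exactly $\tfrac{1}{4\epsilon^{2}}$), absorbs the finitely many small primes into a $\pi(\tfrac{1}{4\epsilon^{2}})$ error term, applies Corollary~\ref{Sato-Tate-cor}, and lets $\epsilon\to 0^{+}$, just as you do. Your handling of the $\pi_{\le 0}$ case via $\pi_{\le 0}(x)\ge \pi_{<0}(x)$ is a harmless shortcut compared with the paper's ``similar proof'' remark.
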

\begin{proof}
For any fixed (but small) $\epsilon >0$, we have the following inclusion of sets 
$$ \left\lbrace p\leq x: p \nmid N,\  c(p)<0 \right\rbrace \supseteq \left\lbrace  p \leq x: p \nmid N,\ p> \frac{1}{4\epsilon^2},\ B_1(p) \in [\epsilon,1] \right\rbrace.$$ 
Hence, we have
$$ \# \{p\leq x: p \nmid N,\ c(p)<0\} + \pi\left(\frac{1}{4\epsilon^2}\right) \geq \# \{ p \leq x:  p \nmid N,\  B_1(p) \in [\epsilon,1] \}.$$
Now divide the above inequality by $\pi(x)$
$$ \frac{\# \{p\leq x:  p \nmid N,\ c(p)<0\}}{\pi(x)} + \frac{\pi\left(\frac{1}{4\epsilon^2}\right) }{\pi(x)} 
\geq \frac{\# \left\lbrace p \leq x: p \nmid N,\  B_1(p) \in [\epsilon,1] \right\rbrace}{\pi(x)}.$$
The term $\frac{\pi(\frac{1}{4\epsilon^2})}{\pi(x)}$ tends to zero as $x\ra \infty$ as $\pi(\frac{1}{4\epsilon^2})$ is finite.
By Corollary~\ref{Sato-Tate-cor}, we have
$$ \frac{\# \{ p\leq x : p \nmid N,\ B_1(p) \in [\epsilon,1]    \} }{\pi(x)}  \lra \mu_{\mrm{ST}}([\epsilon, 1]) \quad\ \mrm{as}\quad 	x \ra \infty. $$ 
This implies that 
\begin{equation}
\label{key-inequality}
\underset{x \ra \infty}{\mrm{lim\ inf}} \ \frac{\pi_{<0}(x)}{\pi(x)} \geq \mu_{\mrm{ST}}([\epsilon,1]),
\end{equation}
where $\pi_{<0}(x)=\# \{p\leq x:p \nmid N,\ c(p)<0\}$ by definition. Since the inequality~\eqref{key-inequality} holds for all $\epsilon>0$, we have that 
$$ \underset{x \ra \infty}{\mrm{lim\  inf}} \ \frac{\pi_{<0}(x)}{\pi(x)} \geq \mu_{\mrm{ST}}([0,1]) = \frac{1}{2}.$$
A similarly proof  shows that
$\underset{x \ra \infty}{\mrm{lim\ inf}} \ \frac{\pi_{\leq 0}(x)}{\pi(x)} \geq \frac{1}{2}.$
 \end{proof}

\begin{proof}[\textbf{Proof of Theorem~\ref{main-thm}}]
By Lemma~\ref{lem-main-thm} and Proposition~\ref{prop-main-thm}, we have
$$ \frac{1}{2} \geq \underset{x \ra \infty}{\mrm{lim\  sup}} \ \frac{\pi_{< 0}(x)}{\pi(x)} \geq \underset{x \ra \infty}{\mrm{lim\  inf}} 
\ \frac{\pi_{< 0}(x)}{\pi(x)} \geq   \frac{1}{2},$$
Hence, the limit  $ \underset{x \ra \infty}{\mrm{lim}} \frac{\pi_{<0}(x)}{\pi(x)}$ exists 
and is equal to $\frac{1}{2}$. Therefore, the natural density of the set $\mathbb{P}_{<0}$ is $\frac{1}{2}$. 
Similarly, one can also argue for the sets $\mathbb{P}_{>0}$, $\mathbb{P}_{ \leq 0}$, and $\mathbb{P}_{ \geq 0}$, 
and show that the natural densities of these sets are $\frac{1}{2}$. The claim for $\mathbb{P}_{=0}$ follows from 
the former statements.
\end{proof}

By arguing as in the proof of the above theorem, one can also prove the following statement.
\begin{thm}
Let $f$ be a GMF such that the hypotheses of Theorem~\ref{main-thm} hold. Then, for any subinterval $I \subseteq [-1,1]$, the natural density of 
the set $$\left\lbrace p \in \mathbb{P} : p \nmid N,\ c_1(p) \in I \right\rbrace$$
is $\mu_{\mrm{ST}}(I)$, where $c_1(p):= \frac{c(p)\sqrt{p}}{2}$ is the Sato-Tate normalization for $f$. 
\end{thm}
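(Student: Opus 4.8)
The plan is to deduce the statement from the Sato--Tate equidistribution of the numbers $B_1(p)$ recorded in Corollary~\ref{Sato-Tate-cor}, using the exact relation already extracted in the proof of Theorem~\ref{main-thm}. Since $c(1) = -1$ gives $b(p) = 1 - p\,c(p)$, dividing by $2\sqrt{p}$ yields
\begin{equation*}
c_1(p) = \frac{c(p)\sqrt{p}}{2} = \frac{1}{2\sqrt{p}} - B_1(p),
\end{equation*}
so $c_1(p)$ agrees with $-B_1(p)$ up to the error term $\tfrac{1}{2\sqrt{p}}$, which tends to $0$. Because the Sato--Tate density $\tfrac{2}{\pi}\sqrt{1-t^2}$ is an even function of $t$, the measure $\mu_{\mrm{ST}}$ is invariant under $t \mapsto -t$; hence $-B_1(p)$ is $\mu_{\mrm{ST}}$-equidistributed whenever $B_1(p)$ is, and a vanishing perturbation cannot change this. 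The proof is therefore a squeeze argument of exactly the shape used for Lemma~\ref{lem-main-thm} and Proposition~\ref{prop-main-thm}, now with the half-lines $[0,1]$ and $[-1,0]$ replaced by an arbitrary subinterval.

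To carry this out, I would fix $I = [a,b] \subseteq [-1,1]$ with $a < b$; replacing a half-open, open, or one-point interval by a slightly smaller and a slightly larger closed interval and invoking that $\mu_{\mrm{ST}}$ has no atoms reduces everything to this case. Writing $\nu_I(x) := \#\{ p \le x : p \nmid N,\ c_1(p) \in I\}$, note that $c_1(p) \in I$ if and only if $B_1(p) \in \bigl[\tfrac{1}{2\sqrt{p}} - b,\ \tfrac{1}{2\sqrt{p}} - a\bigr]$. Given $\epsilon > 0$ with $\epsilon < b - a$, every prime $p > \tfrac{1}{4\epsilon^2}$ satisfies $0 < \tfrac{1}{2\sqrt{p}} < \epsilon$, and therefore
\begin{equation*}
[\epsilon - b,\, -a] \subseteq \Bigl[\tfrac{1}{2\sqrt{p}} - b,\ \tfrac{1}{2\sqrt{p}} - a\Bigr] \subseteq [-b,\, \epsilon - a].
\end{equation*}
Discarding the finitely many primes $p \le \tfrac{1}{4\epsilon^2}$ (as in the proof of Proposition~\ref{prop-main-thm}) and applying Corollary~\ref{Sato-Tate-cor} to the two fixed outer intervals, intersected with $[-1,1]$ where necessary, I would obtain
\begin{equation*}
\mu_{\mrm{ST}}\bigl([\epsilon - b,\, -a]\bigr) \le \liminf_{x \to \infty} \frac{\nu_I(x)}{\pi(x)} \le \limsup_{x \to \infty} \frac{\nu_I(x)}{\pi(x)} \le \mu_{\mrm{ST}}\bigl([-b,\, \epsilon - a] \cap [-1,1]\bigr).
\end{equation*}
Letting $\epsilon \downarrow 0$ and using that $t \mapsto \sqrt{1-t^2}$ is bounded and continuous on $[-1,1]$, both outer terms tend to $\mu_{\mrm{ST}}\bigl([-b,-a]\bigr)$ — here $[-b,-a] \subseteq [-1,1]$ because $[a,b] \subseteq [-1,1]$ — so the limit exists and equals $\mu_{\mrm{ST}}\bigl([-b,-a]\bigr)$, which by the evenness of the density equals $\mu_{\mrm{ST}}\bigl([a,b]\bigr) = \mu_{\mrm{ST}}(I)$.

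There is no genuine obstacle beyond what was already handled in Theorem~\ref{main-thm}: the only new inputs are the symmetry $\mu_{\mrm{ST}}(-A) = \mu_{\mrm{ST}}(A)$ and the passage from the sign cut-off to an arbitrary interval. The single point requiring mild care is the interval bookkeeping — keeping track of which endpoint of $\bigl[\tfrac{1}{2\sqrt{p}} - b,\ \tfrac{1}{2\sqrt{p}} - a\bigr]$ moves and by how much, and verifying that intersecting with $[-1,1]$ (which matters only when $a = -1$ or $b = 1$) does not affect the limiting value — but this mirrors Lemma~\ref{lem-main-thm} and Proposition~\ref{prop-main-thm} essentially verbatim.
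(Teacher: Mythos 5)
Your proposal is correct and follows essentially the same route as the paper, which simply asserts the result ``by arguing as in the proof of the above theorem'': you use the relation $c_1(p)=\tfrac{1}{2\sqrt{p}}-B_1(p)$, the squeeze argument with the negligible shift $\tfrac{1}{2\sqrt{p}}$ discarded via the $\pi(1/4\epsilon^2)$ trick, Corollary~\ref{Sato-Tate-cor}, and the symmetry of $\mu_{\mrm{ST}}$ under $t\mapsto -t$. Your write-up just makes the interval bookkeeping explicit, which the paper leaves to the reader.
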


We finish this section with a stronger version of Theorem~\ref{main-thm}. 
\begin{thm}
\label{main-thm-finer}
Let $f$ be a GMF such that the hypotheses of Theorem~\ref{main-thm} hold. Let $q$ be a natural number and $a$ an integer with $(a, q)=1$.
Then, for any subinterval $I \subseteq [-1,1]$, the natural density of primes $p$ for which $c_1(p) \in I$ and
$p \equiv a \pmod q$ is ${\mu_{\mrm{ST}}(I)}/{\varphi(q)}$, i.e.,
$$ \underset{x \ra \infty}{\mathrm{lim}}\   \frac{\pi_I(x)}{\pi(x)} = \frac{\mu_{\mrm{ST}}(I)}{\varphi(q)} 
= \frac{2}{\pi\varphi(q)} \int_I  \sqrt{1-t^2}  dt,$$	
where $\pi_I(x):= \# \left\lbrace p\leq x: p\nmid N,c_1(p) \in I\ \mrm{and}\ p \equiv a \pmod q\right\rbrace$.
In particular, the natural density of the set
$\left\lbrace p \in \mathbb{P}:p \nmid N,c(p)>0 \ \mrm{and}\ p \equiv a \pmod q \right\rbrace$ is
$\frac{1}{2 \varphi(q)}$. 
\end{thm}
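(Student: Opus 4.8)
The plan is to rerun the squeeze argument used for Theorem~\ref{main-thm} (Lemma~\ref{lem-main-thm} and Proposition~\ref{prop-main-thm}), with the plain Sato--Tate equidistribution of Corollary~\ref{Sato-Tate-cor} replaced by its joint refinement with the Chebotarev density theorem. The first step is to invoke the hybrid Chebotarev--Sato--Tate theorem of~\cite[Chap.~12]{MM12}: since $g$ (as in~\eqref{log-derivative}) is a normalized cuspidal eigenform of weight $2$ on $\Gamma_0(N)$ with trivial nebentypus and \emph{without} complex multiplication, the pairs $\bigl(B_1(p),\, p \bmod q\bigr)$ are equidistributed in $[-1,1]\times(\Z/q\Z)^{*}$ with respect to the product of $\mu_{\mathrm{ST}}$ with the uniform probability measure on $(\Z/q\Z)^{*}$. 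In the form needed here: for every subinterval $J\subseteq[-1,1]$ and every $a$ with $(a,q)=1$,
\begin{equation}
\label{joint-equidist}
\underset{x \ra \infty}{\mrm{lim}}\ \frac{\#\{p\leq x:\ p\nmid N,\ B_1(p)\in J,\ p\equiv a \pmod{q}\}}{\pi(x)}\ =\ \frac{\mu_{\mathrm{ST}}(J)}{\varphi(q)}.
\end{equation}
This is the only nontrivial external input; it rests on the $\ell$-adic representations attached to $g$ having large (open) image, so that the Sato--Tate statistics of $B_1(p)$ decouple from the residue $p \bmod q$, which is cut out by the abelian extension $\Q(\zeta_q)/\Q$ --- precisely the independence packaged in loc.\ cit.\ (or obtained by combining~\cite{BGHT11} with the Chebotarev density theorem).

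The second step is to transfer~\eqref{joint-equidist} from the variable $B_1(p)$ to $c_1(p)$. As in the text, $c(1)=-1$ gives $b(p)=1-pc(p)$, hence
$$ B_1(p)\ =\ \frac{1}{2\sqrt p}-\frac{c(p)\sqrt p}{2}\ =\ \frac{1}{2\sqrt p}-c_1(p), $$
so for $I=[\alpha,\beta]$ the condition $c_1(p)\in I$ is equivalent to $B_1(p)\in\bigl[\tfrac{1}{2\sqrt p}-\beta,\ \tfrac{1}{2\sqrt p}-\alpha\bigr]$ (automatically a subinterval of $[-1,1]$, since $B_1(p)\in[-1,1]$ always). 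Fixing a small $\epsilon>0$ and discarding the finitely many primes $p\leq\tfrac{1}{4\epsilon^2}$ (whose count is $o(\pi(x))$), this interval is sandwiched between $[-\beta+\epsilon,\,-\alpha-\epsilon]$ and $[-\beta-\epsilon,\,-\alpha+\epsilon]$. Intersecting everything with the congruence $p\equiv a\pmod{q}$, dividing by $\pi(x)$, and applying~\eqref{joint-equidist} with $J=[-\beta+\epsilon,-\alpha-\epsilon]$ and with $J=[-\beta-\epsilon,-\alpha+\epsilon]$ gives
$$ \frac{\mu_{\mathrm{ST}}([-\beta+\epsilon,-\alpha-\epsilon])}{\varphi(q)}\ \leq\ \underset{x \ra \infty}{\mrm{lim\ inf}}\ \frac{\pi_I(x)}{\pi(x)}\ \leq\ \underset{x \ra \infty}{\mrm{lim\ sup}}\ \frac{\pi_I(x)}{\pi(x)}\ \leq\ \frac{\mu_{\mathrm{ST}}([-\beta-\epsilon,-\alpha+\epsilon])}{\varphi(q)}. $$
Letting $\epsilon\to 0$ and using the symmetry of $\mu_{\mathrm{ST}}$ about $0$, so that $\mu_{\mathrm{ST}}([-\beta,-\alpha])=\mu_{\mathrm{ST}}(I)$, forces the limit to exist and equal $\mu_{\mathrm{ST}}(I)/\varphi(q)$; the half-open and open cases of $I$ differ only in endpoints, which are $\mu_{\mathrm{ST}}$-null.

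For the final ``in particular'' clause, I would not pass through a general $I$ but use directly that $c(p)>0\iff B_1(p)<\tfrac{1}{2\sqrt p}$ (the equivalence recorded just before Lemma~\ref{lem-main-thm}), and run the same sandwich with $J=[-1,-\epsilon]$ from below and $J=[-1,\epsilon]$ from above; as $\epsilon\to 0$ both bounds converge to $\mu_{\mathrm{ST}}([-1,0])/\varphi(q)=\tfrac{1}{2\varphi(q)}$, which is the assertion. The step I expect to be the main obstacle is the first one: one must make sure that~\cite[Chap.~12]{MM12} indeed supplies the joint equidistribution~\eqref{joint-equidist} for an \emph{arbitrary} modulus $q$ --- this uses that a non-CM newform is not a self-twist by any Dirichlet character, so no spurious correlation between $b(p)$ and $p\bmod q$ can occur. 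Once that is in hand, everything else is the bookkeeping already carried out in Lemma~\ref{lem-main-thm} and Proposition~\ref{prop-main-thm}, now performed uniformly with the extra congruence condition attached.
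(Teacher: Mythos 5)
Your proposal is correct and follows exactly the route the paper takes: the paper's proof simply says to repeat the argument of Theorem~\ref{main-thm} (the squeeze via Lemma~\ref{lem-main-thm} and Proposition~\ref{prop-main-thm}) with Corollary~\ref{Sato-Tate-cor} replaced by the hybrid Chebotarev--Sato--Tate theorem of~\cite[Cor.~1.2, Chap.~12]{MM12}, which is precisely your joint-equidistribution input followed by the $\epsilon$-sandwich handling the shift $B_1(p)=\tfrac{1}{2\sqrt p}-c_1(p)$. Your write-up just makes explicit the bookkeeping the paper leaves implicit, including the correct direct treatment of the ``in particular'' clause via $c(p)>0\iff B_1(p)<\tfrac{1}{2\sqrt p}$.
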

\begin{proof}
The proof is similar to the proof of Theorem~\ref{main-thm}, except that here we use the hybrid 
Chebotarev-Sato-Tate theorem~\cite[Cor. 1.2 in Chap. 12]{MM12} in the proof. 
\end{proof}

\section{Proof of Theorem~\ref{main-thm-2}}
\label{section-main-thm-2}
In this section, we shall prove Theorem~\ref{main-thm-2} by using  Deuring's equidistribution theorem for CM elliptic curves.   
We assume that $g$ is the associated eigenform of weight $2$ on $\Gamma_0(N)$ to  a CM elliptic curve $E$ over $\Q$ of conductor $N$,
in the sense of~\cite{Wil95},~\cite{BCDT01}.
Recall that, $g(z) = \sum_{n=1}^{\infty} b(n) q^n$ with $b(1)=1$.

Before we proceed to the proof, let us recall the appropriate measure and the  equidistribution theorem for CM elliptic curves.
By ~\cite{Deu41}, the appropriate measure $\mu_{\mrm{CM}}$ for equidistribution of the quantity $B_1(p):= b(p)/2\sqrt{p}\in [-1,0) \cup (0,1]$
is $\frac{1}{2\pi} (1-t^2)^{-\frac{1}{2}}$. At the real number $0$, there should be a Dirac measure with weight $1/2$. More precisely:
\begin{thm}[Deuring]
\label{Deuring}
Suppose that $K$ is an imaginary quadratic field and that $E$ has CM by an order in $K$. Then
for any prime $p$ of good reduction for $E$, we have 
$$b(p)=0 \Longleftrightarrow p \ is\ inert\  in\  K.$$ 
Furthermore, if $I \subset [-1,1]$ is some interval with $0 \not \in I$, then 
$$ \underset{x \ra \infty}{\mathrm{lim}} \ \frac{\pi_I(x)}{\pi(x)} = \mu_{\mrm{CM}}(I) = \frac{1}{2\pi}
\int_I \frac{1}{\sqrt{1-t^2}} dt,$$
where $\pi_I(x):= \# \left\lbrace  p\leq x:  p \nmid N, \ B_1(p) \in I \right\rbrace$.
\end{thm}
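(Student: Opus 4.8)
The statement is a classical theorem of Deuring, so rather than a genuinely new argument I will outline the two standard ingredients behind it; a full treatment is in~\cite{Deu41} and in the usual references on complex multiplication. The plan has three stages: (i) the reduction dichotomy giving the first assertion, (ii) Hecke equidistribution of Frobenius angles over the degree-one primes of $K$, and (iii) descent from primes of $K$ to rational primes.

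First I would settle $b(p)=0 \iff p$ inert in $K$. For $p \nmid N$ of good reduction (the finitely many exceptional small primes being checked by hand, and being density-negligible in any case), the reduced curve $\widetilde E/\F_p$ inherits complex multiplication by the same order, and Deuring's reduction theory says $\widetilde E$ is supersingular exactly when $p$ does not split in $K$. If $p$ is inert then $\widetilde E$ is supersingular and, for $p$ large, $b(p)=0$. If $p$ splits, $p\mcO_K = \mfp\overline{\mfp}$, the Frobenius of $\widetilde E$ is the reduction of an element $\pi_\mfp \in \mcO_K$ with $\pi_\mfp\overline{\pi_\mfp}=p$, so $b(p) = \pi_\mfp + \overline{\pi_\mfp} \neq 0$; writing $\pi_\mfp = \sqrt{p}\,e^{i\theta_\mfp}$ one gets $B_1(p) = \cos\theta_\mfp$.

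For the equidistribution (with $0 \notin I$) I would invoke Hecke's theorem. The assignment $\mfp \mapsto \pi_\mfp/\sqrt{N\mfp} = e^{i\theta_\mfp}$ on degree-one primes of $K$ is the restriction of a unitary Hecke character $\xi$ of $K$ of infinite order, essentially the CM Gr\"ossencharacter attached to $E$ divided by the $\tfrac12$-power of the norm; thus $e^{in\theta_\mfp} = \xi(\mfp)^n$. Since $\xi^n$ is a nontrivial Hecke character for every $n \neq 0$, the $L$-function $L(s,\xi^n)$ is entire and nonvanishing on $\mrm{Re}(s) \geq 1$, so $\sum_{N\mfp \leq x}\xi(\mfp)^n = o(\pi(x))$ by the prime ideal theorem for $K$ twisted by $\xi^n$. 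By Weyl's criterion the angles $\theta_\mfp$ equidistribute on $\R/2\pi\Z$ against the uniform measure, whose image under $\theta \mapsto \cos\theta$ is $\frac{1}{\pi}(1-t^2)^{-1/2}\,dt$ on $(-1,1)$.

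Finally I would pass from primes of $K$ to rational primes: a split $p$ contributes the conjugate pair $\mfp,\overline{\mfp}$ with $\theta_{\overline{\mfp}} = -\theta_\mfp$, hence the single value $\cos\theta_\mfp$ to the family $\{B_1(p)\}$; inert primes contribute only the value $0$, which is excluded as $0 \notin I$; ramified and bad primes are finite in number. Since the split primes have natural density $\tfrac12$ in $\mbP$ by quadratic reciprocity for $K$, one obtains $\pi_I(x) = \bigl(\tfrac12 + o(1)\bigr)\,\pi(x)\cdot\bigl(\tfrac{1}{\pi}\int_I (1-t^2)^{-1/2}\,dt + o(1)\bigr)$, i.e. $\lim_{x\to\infty} \pi_I(x)/\pi(x) = \frac{1}{2\pi}\int_I (1-t^2)^{-1/2}\,dt = \mu_{\mrm{CM}}(I)$, and the inert primes record the weight-$\tfrac12$ Dirac mass at $0$. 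The genuinely hard inputs are Deuring's reduction dichotomy for the first assertion and Hecke's nonvanishing of $L(1,\xi^n)$ for $n\neq 0$ for the second; everything else is bookkeeping, which is why in the text this is simply cited from~\cite{Deu41}.
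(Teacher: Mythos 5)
Your outline is essentially correct, but note that the paper does not prove this statement at all: it is quoted as an external input, attributed to Deuring and cited from~\cite{Deu41}, so there is no internal argument to compare against. What you sketch is the standard classical proof that the citation stands in for: Deuring's reduction dichotomy (good supersingular reduction exactly at the non-split primes, giving $b(p)=0$ for inert $p$ once $p\geq 5$, the finitely many small primes being irrelevant for densities), Hecke's equidistribution of the Gr\"ossencharacter angles $\theta_{\mfp}$ over degree-one primes of $K$ via nonvanishing of $L(s,\xi^n)$ on $\mathrm{Re}(s)\geq 1$ and Weyl's criterion, and the descent to rational primes using that split primes have density $\tfrac12$, which converts the pushforward measure $\tfrac{1}{\pi}(1-t^2)^{-1/2}\,dt$ into $\mu_{\mrm{CM}}$ plus the Dirac mass of weight $\tfrac12$ at $0$ coming from the inert primes. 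Two minor points worth being careful about if you wrote this out in full: the Frobenius element $\pi_{\mfp}$ is only determined up to units, so the unitary character $\xi$ must be the one attached to $E$ (this also covers CM by a non-maximal order, since $E$ is defined over $\Q$); and the final ``product of densities'' step should be phrased as applying Hecke's equidistribution to the split primes themselves (which is exactly what the degree-one prime count gives), rather than multiplying two separate limits. With those caveats your proposal is a faithful reconstruction of the proof the paper delegates to the literature.
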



Now, we shall make some remarks. 
\begin{remark}
\label{CM-remar}
By Theorem~\ref{Deuring}, the natural density of primes $p$ for which $b(p)=0$
is equal to the natural density of inert primes $p$ in $K$. By the Chebotarev density theorem, the natural density 
of inert primes $p$ in $K$ is $1/2$.
Observe that $b(p) = 0 \Longleftrightarrow c(p)=\frac{1}{p}>0$. 
\end{remark}
\begin{remark}
\label{Integration-remark}
$$\mu_{\mrm{CM}}([-1,0))=\mu_{\mrm{CM}}((0,1])=\frac{1}{2\pi}\int_0^1 \frac{dx}{\sqrt{1-x^2}} = \frac{1}{\pi} \int_0^1 \sqrt{1-x^2} \ dx = \frac{1}{4}.$$
\end{remark}

Now we are ready to prove Theorem~\ref{main-thm-2}.  
\begin{lem}
\label{lem-main-thm-2}
$\underset{x \ra \infty}{\mrm{lim\ inf}} 
\ \frac{\pi_{> 0}(x)}{\pi(x)} \geq \frac{1}{2}+\mu_{\mrm{CM}}((0,1])$.
Similarly, $\underset{x \ra \infty}{\mrm{lim\  sup}} 
\ \frac{\pi_{< 0}(x)}{\pi(x)} \leq  \frac{1}{4}.$
\end{lem}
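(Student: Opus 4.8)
The plan is to translate the sign of $c(p)$ into a condition on $B_1(p)$ exactly as in the non-CM case, but now exploiting the extra mass that Deuring's theorem places at $0$. Recall from $b(p) = 1 - pc(p)$ that
$$ c(p) > 0 \iff B_1(p) < \tfrac{1}{2\sqrt p}, \qquad c(p) < 0 \iff B_1(p) > \tfrac{1}{2\sqrt p}. $$
First I would handle the $\limsup$ for $\pi_{<0}$: since $c(p)<0$ forces $B_1(p) > \tfrac{1}{2\sqrt p} > 0$, and in particular $B_1(p) \in (0,1]$, we get the inclusion $\{p \le x : p \nmid N,\ c(p)<0\} \subseteq \{p \le x : p \nmid N,\ B_1(p) \in (0,1]\}$. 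Dividing by $\pi(x)$ and letting $x \to \infty$, Deuring's theorem (Theorem~\ref{Deuring}) together with Remark~\ref{Integration-remark} gives $\mu_{\mrm{CM}}((0,1]) = \tfrac14$, hence $\limsup_{x\to\infty} \pi_{<0}(x)/\pi(x) \le \tfrac14$.

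For the $\liminf$ of $\pi_{>0}$, I would split the primes with $c(p)>0$ into two disjoint families: those with $b(p) = 0$ (equivalently $c(p) = 1/p > 0$, by Remark~\ref{CM-remar}), and those with $b(p) \ne 0$ and $B_1(p) \in [-1,0)$ — note the latter certainly satisfy $B_1(p) < \tfrac{1}{2\sqrt p}$, so $c(p)>0$. These two families are disjoint, so
$$ \pi_{>0}(x) \ \ge\ \#\{p \le x : p\nmid N,\ b(p)=0\} \ +\ \#\{p \le x : p \nmid N,\ B_1(p) \in [-1,0)\}. $$
Dividing by $\pi(x)$: the first term tends to $\tfrac12$ (density of inert primes in $K$, by Remark~\ref{CM-remar}), and the second tends to $\mu_{\mrm{CM}}([-1,0)) = \tfrac14$ by Deuring plus Remark~\ref{Integration-remark}. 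Hence $\liminf_{x\to\infty} \pi_{>0}(x)/\pi(x) \ge \tfrac12 + \mu_{\mrm{CM}}((0,1]) = \tfrac12 + \tfrac14$, as claimed. (The only mild subtlety, as in Proposition~\ref{prop-main-thm}, is that to invoke Deuring for the half-open interval $[-1,0)$ one may first pass to $[-1,-\epsilon]$, absorb the finitely many primes $p \le 1/(4\epsilon^2)$ into an error term that vanishes after dividing by $\pi(x)$, and then let $\epsilon \to 0$; here however $0 \notin [-1,0)$ already has $b(p)=0$ removed, so one can apply Deuring on $[-1,-\epsilon]$ directly.)

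The main obstacle is purely bookkeeping: one must be careful that the threshold $\tfrac{1}{2\sqrt p}$ is strictly positive, so that $c(p)<0$ genuinely lands inside $(0,1]$ (giving the clean $\tfrac14$ bound with no $\epsilon$ needed), while for the lower bound on $\pi_{>0}$ the gain comes entirely from adjoining the inert primes, where $b(p)=0$ yet $c(p)=1/p>0$ — this is the one place the CM structure, via Deuring's first assertion and Chebotarev, does something the Sato-Tate argument cannot. No serious analytic input beyond Theorem~\ref{Deuring} and the Chebotarev density theorem is required.
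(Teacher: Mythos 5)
Your proof is correct and follows exactly the route the paper intends: the inclusion $\{c(p)<0\}\subseteq\{B_1(p)\in(0,1]\}$ handled by Theorem~\ref{Deuring}, and the lower bound for $\pi_{>0}$ obtained by combining the inert primes (where $b(p)=0$ gives $c(p)=1/p>0$, density $\tfrac12$ via Remark~\ref{CM-remar} and Chebotarev) with the primes having $B_1(p)\in[-1,0)$ of density $\mu_{\mrm{CM}}([-1,0))=\tfrac14$. This is precisely the adaptation of Lemma~\ref{lem-main-thm} and Proposition~\ref{prop-main-thm} that the paper sketches, so no further comment is needed.
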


\begin{prop}
\label{prop-main-thm-2}
$\underset{x \ra \infty}{\mrm{lim\ sup}} 
\ \frac{\pi_{> 0}(x)}{\pi(x)} \leq  \frac{1}{2}+\mu_{\mrm{CM}}((0,1]).$
Similarly, 
 $\underset{x \ra \infty}{\mrm{lim\ inf}} \ \frac{\pi_{< 0}(x)}{\pi(x)} \geq \frac{1}{4}.$
\end{prop}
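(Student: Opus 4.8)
The strategy mirrors the proof of Proposition~\ref{prop-main-thm}, but now using Deuring's measure $\mu_{\mrm{CM}}$ (Theorem~\ref{Deuring}) in place of the Sato-Tate measure, together with the density-$1/2$ statement for inert primes from Remark~\ref{CM-remar}. The first step is to translate the sign conditions on $c(p)$ into conditions on $B_1(p)$, exactly as in \S\ref{section-main-thm}: since $g$ has weight $2$ and $b(p) = 1 - p\,c(p)$, we again get $c(p) > 0 \Longleftrightarrow B_1(p) < \frac{1}{2\sqrt{p}}$ and $c(p) < 0 \Longleftrightarrow B_1(p) > \frac{1}{2\sqrt{p}}$. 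The subtlety in the CM case is the atom of mass $1/2$ of $\mu_{\mrm{CM}}$ at $t = 0$, i.e. the inert primes with $b(p) = 0$; by Remark~\ref{CM-remar} these contribute to $\mathbb{P}_{>0}$ (they give $c(p) = 1/p > 0$), which accounts for the extra $\frac{1}{2}$ in the target density $\frac{1}{2} + \mu_{\mrm{CM}}((0,1]) = \frac{3}{4}$.

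For the bound $\limsup_{x\to\infty} \frac{\pi_{>0}(x)}{\pi(x)} \leq \frac{1}{2} + \mu_{\mrm{CM}}((0,1])$, I would fix a small $\epsilon > 0$ and observe that if $c(p) > 0$, then either $b(p) = 0$ (an inert prime), or $b(p) \neq 0$ and $B_1(p) < \frac{1}{2\sqrt{p}}$; in the latter case, once $p > \frac{1}{4\epsilon^2}$ we have $B_1(p) \in [-1, \epsilon)$ with $B_1(p) \neq 0$. Hence
$$
\# \{p \leq x : p \nmid N,\ c(p) > 0\} \leq \pi_{\mrm{inert}}(x) + \#\{p \leq x : p \nmid N,\ B_1(p) \in [-1,\epsilon),\ B_1(p) \neq 0\} + \pi\!\left(\tfrac{1}{4\epsilon^2}\right).
$$
Dividing by $\pi(x)$, letting $x \to \infty$, and using Remark~\ref{CM-remar} (density $1/2$ for inert primes), Theorem~\ref{Deuring} (applied to the interval $[-1,\epsilon) \setminus \{0\}$, which avoids $0$), and the finiteness of $\pi(1/4\epsilon^2)$, gives $\limsup \frac{\pi_{>0}(x)}{\pi(x)} \leq \frac{1}{2} + \mu_{\mrm{CM}}([-1,\epsilon))$. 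Since this holds for every $\epsilon > 0$ and $\mu_{\mrm{CM}}([-1,\epsilon)) \to \mu_{\mrm{CM}}([-1,0)) = \mu_{\mrm{CM}}((0,1]) = \frac14$ as $\epsilon \to 0^{+}$ (using Remark~\ref{Integration-remark} and the symmetry of $\mu_{\mrm{CM}}$), the claimed bound follows.

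For the companion bound $\liminf_{x\to\infty} \frac{\pi_{<0}(x)}{\pi(x)} \geq \frac{1}{4}$, I would again fix $\epsilon > 0$ and use the inclusion
$$
\{p \leq x : p \nmid N,\ c(p) < 0\} \supseteq \left\{ p \leq x : p \nmid N,\ p > \tfrac{1}{4\epsilon^2},\ B_1(p) \in [\epsilon, 1] \right\},
$$
so that $\#\{p \leq x : p \nmid N,\ c(p) < 0\} + \pi(1/4\epsilon^2) \geq \#\{p \leq x : p \nmid N,\ B_1(p) \in [\epsilon,1]\}$. Dividing by $\pi(x)$, letting $x \to \infty$, and invoking Theorem~\ref{Deuring} for the interval $[\epsilon,1]$ (which does not contain $0$) yields $\liminf \frac{\pi_{<0}(x)}{\pi(x)} \geq \mu_{\mrm{CM}}([\epsilon,1])$; taking $\epsilon \to 0^{+}$ and using $\mu_{\mrm{CM}}((0,1]) = \frac14$ from Remark~\ref{Integration-remark} finishes the proof. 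The only real point requiring care — hence the main (minor) obstacle — is the bookkeeping around $t = 0$: one must consistently separate the inert primes (the Dirac mass) from the primes with $B_1(p)$ small but nonzero, and apply Deuring's equidistribution only to intervals that exclude $0$, where it is stated.
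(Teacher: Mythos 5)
Your proposal is correct and follows exactly the route the paper intends: the paper gives no details for Proposition~\ref{prop-main-thm-2} beyond saying it is proved like Lemma~\ref{lem-main-thm} and Proposition~\ref{prop-main-thm} with Theorem~\ref{Deuring} and Remark~\ref{CM-remar} in place of the Sato--Tate input, and your $\epsilon$-argument with the inert primes (the Dirac mass at $0$) separated out is precisely that, with the details filled in. The only cosmetic point is that $[-1,\epsilon)\setminus\{0\}$ is not itself an interval, so Theorem~\ref{Deuring} should formally be applied to the two intervals $[-1,0)$ and $(0,\epsilon)$ separately, which changes nothing in the argument.
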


The proofs of Lemma~\ref{lem-main-thm-2} and Proposition~\ref{prop-main-thm-2} are
similar to the proofs of Lemma \ref{lem-main-thm} and Proposition~\ref{prop-main-thm}, except that
here we use Theorem~\ref{Deuring} instead of Theorem~\ref{Sato-Tate-thm} and also the Remark~\ref{CM-remar}
in the proof.
 
By Lemma~\ref{lem-main-thm-2} and by Proposition~\ref{prop-main-thm-2}, 
we have
$$\frac{3}{4} \leq \underset{x \ra \infty}{\mathrm{lim\ inf}} \ \frac{\pi_{>0} (x)}{\pi(x)} 
  \leq \underset{x \ra \infty}{\mathrm{lim\ sup}} \ \frac{\pi_{>0} (x)}{\pi(x)} \leq  \frac{3}{4}. $$
Hence, the limit  $\underset{x \ra \infty}{\mrm{lim}} \frac{\pi_{>0}(x)}{\pi(x)}$
exists, and is equal to $\frac{3}{4}$, i.e., $d(\mathbb{P}_{>0})=\frac{3}{4}$. 
Similarly,  we can show that the set $\mathbb{P}_{<0}$ has natural density $\frac{1}{4}$, i.e., $d(\mathbb{P}_{<0}) = \frac{1}{4}$.
This finishes the proof of Theorem~\ref{main-thm-2}.

We finish this section with the following assertion whose proof is similar to the proof of the above theorem,
of course, which itself is an application of Theorem~\ref{Deuring}.

\begin{thm}
Let $f$ be a GMF such that the hypotheses of Theorem~\ref{main-thm-2} hold. If $I \subset [-1,1]$ is either a subinterval of $[-1,0]$ or some closed interval in $(0,1]$, 
then 
\begin{equation}
\label{final-eqn}
\underset{x \ra \infty}{\mathrm{lim}}  \frac{\# \{  p\leq x:  p \nmid N, \ c(p)\sqrt{p}/2 \in I \}}{\pi(x)} =  \frac{1}{2\pi}
\int_I \frac{1}{\sqrt{1-t^2}} dt,
 \end{equation}
i.e., the natural density of the set $$\{p\in \mathbb{P}: p\nmid N,\ c(p)\sqrt{p}/2 \in I \}$$
is $\mu_{\mrm{CM}}(I)$. 
\end{thm}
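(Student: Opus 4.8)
The plan is to deduce this statement from Deuring's equidistribution theorem (Theorem~\ref{Deuring}) by comparing the Sato--Tate normalization $c_1(p):=c(p)\sqrt p/2$ attached to $f$ with the normalization $B_1(p)=b(p)/2\sqrt p$ attached to $g$. As in the proof of Theorem~\ref{main-thm}, the relation $b(p)=-\sum_{d\mid p}d\,c(d)=1-p\,c(p)$, which uses $c(1)=-1$, gives the identity
\[
 c_1(p)\;=\;\frac{1-b(p)}{2\sqrt p}\;=\;\frac{1}{2\sqrt p}-B_1(p)\qquad(p\nmid N),
\]
so the event ``$c_1(p)\in I$'' is the event ``$B_1(p)\in\tfrac{1}{2\sqrt p}-I$'': $B_1(p)$ lies in a translate of the reflected interval $-I$ by the small positive quantity $\tfrac{1}{2\sqrt p}$. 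Two facts will be used throughout: the measure $\mu_{\mrm{CM}}$, whose density $\tfrac{1}{2\pi}(1-t^2)^{-1/2}$ on $[-1,1]$ is even and integrable, is atomless and satisfies $\mu_{\mrm{CM}}(-I)=\mu_{\mrm{CM}}(I)$; and the hypothesis on $I$ (a subinterval of $[-1,0]$, or a closed interval contained in $(0,1]$) forces $\tfrac{1}{2\sqrt p}\notin I$, hence $0\notin\tfrac{1}{2\sqrt p}-I$, for all but finitely many $p$. Equivalently, the primes $p$ inert in $K$, for which $B_1(p)=0$ by Theorem~\ref{Deuring} but $c_1(p)=\tfrac{1}{2\sqrt p}>0$, are counted at most finitely often, so the Dirac mass of weight $\tfrac{1}{2}$ at $0$ never intervenes; this is precisely the point of the restriction on $I$.

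The main step is the $\epsilon$-squeeze used already for Lemma~\ref{lem-main-thm} and Proposition~\ref{prop-main-thm}. Write $I=[a,b]$ (the degenerate case $a=b$ is trivial, both sides being $0$) and fix a small $\epsilon>0$. For $p>1/(4\epsilon^2)$ one has $0<\tfrac{1}{2\sqrt p}<\epsilon$, so the identity above yields the inclusions
\[
 \{\,p>\tfrac{1}{4\epsilon^2}:\ p\nmid N,\ c_1(p)\in I\,\}\ \subseteq\ \{\,p:\ p\nmid N,\ B_1(p)\in[-b,\,-a+\epsilon]\cap[-1,1]\,\}
\]
and
\[
 \{\,p>\tfrac{1}{4\epsilon^2}:\ p\nmid N,\ B_1(p)\in[-b+\epsilon,\,-a]\,\}\ \subseteq\ \{\,p:\ p\nmid N,\ c_1(p)\in I\,\}.
\]
Dividing by $\pi(x)$, letting $x\to\infty$, and applying Theorem~\ref{Deuring} to the two fixed comparison intervals --- which, except in the boundary case $b=0$ treated below, avoid $0$ once $\epsilon$ is small, while $B_1(p)\in[-1,1]$ always --- one obtains
\[
 \mu_{\mrm{CM}}\big([-b+\epsilon,-a]\big)\ \le\ \liminf_{x\to\infty}\frac{\pi_I(x)}{\pi(x)}\ \le\ \limsup_{x\to\infty}\frac{\pi_I(x)}{\pi(x)}\ \le\ \mu_{\mrm{CM}}\big([-b,-a+\epsilon]\big),
\]
where $\pi_I(x)=\#\{p\le x:\ p\nmid N,\ c_1(p)\in I\}$. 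Since $\mu_{\mrm{CM}}$ is atomless, both outer quantities tend to $\mu_{\mrm{CM}}([-b,-a])=\mu_{\mrm{CM}}(-I)=\mu_{\mrm{CM}}(I)$ as $\epsilon\to0$, which is the assertion.

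The one genuinely delicate point, and the one I expect to be the main obstacle, is the boundary case $I=[a,0]\subseteq[-1,0]$: here the enlarged interval $[-b,-a+\epsilon]=[0,-a+\epsilon]$ contains $0$, so Theorem~\ref{Deuring} does not apply to it verbatim. I would resolve this by noting that $c_1(p)\in[a,0]$ forces $c_1(p)\le 0$, whereas every inert $p$ has $c_1(p)=\tfrac{1}{2\sqrt p}>0$; since the inert primes are exactly the good primes with $B_1(p)=0$, deleting them replaces $\{p\le x:\ p\nmid N,\ B_1(p)\in[0,-a+\epsilon]\}$ by $\{p\le x:\ p\nmid N,\ B_1(p)\in(0,-a+\epsilon]\}$ without dropping any prime counted on the left of the first inclusion, and Theorem~\ref{Deuring} does apply to the interval $(0,-a+\epsilon]$; the lower bound uses $[\epsilon,-a]$ in the same way. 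This is also where the hypothesis on $I$ is essential: for an interval such as $[0,b]$, or one containing $0$ in its interior, one has $\tfrac{1}{2\sqrt p}\in I$ for all large $p$, so every large inert prime satisfies $c_1(p)\in I$ and the density becomes $\mu_{\mrm{CM}}(I)+\tfrac{1}{2}$ rather than $\mu_{\mrm{CM}}(I)$. Apart from this bookkeeping around the atom at $0$, the argument is a direct transcription of the proof of Theorem~\ref{main-thm}, with $\mu_{\mrm{ST}}$ replaced by $\mu_{\mrm{CM}}$ and Theorem~\ref{Sato-Tate-thm} by Theorem~\ref{Deuring}.
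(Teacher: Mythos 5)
Your proposal is correct and follows essentially the same route as the paper, which simply asserts that the proof is the $\epsilon$-squeeze of Lemma~\ref{lem-main-thm}/Proposition~\ref{prop-main-thm} carried out with Theorem~\ref{Deuring} in place of Theorem~\ref{Sato-Tate-thm}, using $c_1(p)=\tfrac{1}{2\sqrt p}-B_1(p)$ and the fact that the restriction on $I$ keeps the inert primes (the Dirac mass at $0$, where $c_1(p)=\tfrac{1}{2\sqrt p}>0$) from contributing. Your explicit treatment of the boundary case $I=[a,0]$ and of the symmetry $\mu_{\mrm{CM}}(-I)=\mu_{\mrm{CM}}(I)$ just makes precise what the paper leaves implicit in its remarks following the theorem.
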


In the above Theorem, there is a reason for restricting ourselves to the closed intervals of $(0,1]$. For example, if we take $I=(0,1]$, then 
the limit on the left side of the equality in~\eqref{final-eqn} is $3/4$ (by Theorem~\ref{main-thm-2}), where as the quantity on the right side of the equality is  $1/4$ 
(by Remark~\ref{Integration-remark}). Therefore, if we restrict ourselves to closed intervals of $(0,1]$, then 
there is no contribution of primes coming from the inert primes $p$ of $K$ while calculating these densities (cf. Remark~\ref{CM-remar}). 

By the same reasoning, we see that there is no harm in allowing $I$ to be a subinterval of 
$[-1,0]$ rather than only of $[-1,0)$ (like in Theorem~\ref{Deuring}).

\section{On the integrality of $q$-exponents of GMF}
In this penultimate section, we prove a general statement about the integrality of the $q$-exponents $c(p)$($p$ prime) of 
generalized modular functions (GMF). As a consequence, we also show that these functions are poorly behaved with respect to the integrality 
of these $q$-exponents.

For a generalized modular function $f$, we let $K_f$ to denote the number field generated 
by the $q$-exponents $c(n) (n \in \N)$ of $f$ and $\mcO_{K_f}$ to denote the integral closure of $\Z$ in $K_{f}$.

\begin{thm}
\label{main-thm-2-application}
Let $f$ be a non-constant GMF of weight $0$ on $\Gamma_0(N)$ with $div(f) = 0$. 
Suppose that $g$ $($as in ~\eqref{log-derivative}$)$ is a normalized Hecke eigenform.
Then, $c(p)$ is non-zero and integral only for finitely many primes $p$.
\end{thm}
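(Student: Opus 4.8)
The plan is to exploit the two relations from Remark~\ref{key-equation}, namely
$b(n) = -\sum_{d \mid n} d\,c(d)$ and $n\,c(n) = -\sum_{d \mid n}\mu(d)\,b(n/d)$, together with the fact that $g$ being a normalized Hecke eigenform forces its coefficients $b(n)$ to be algebraic integers in $K_f$. Specializing the first relation to a prime $p$ gives $b(p) = -p\,c(p) - c(1) = 1 - p\,c(p)$ (using $c(1) = -1$), so that $c(p) = (1 - b(p))/p$. The point is that $c(p)$ has a forced denominator of $p$ unless $b(p) \equiv 1 \pmod{\mfp}$ for every prime $\mfp$ of $\mcO_{K_f}$ above $p$; otherwise $c(p) \notin \mcO_{K_f}$. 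So the strategy is to show that $b(p) \equiv 1 \pmod{p}$ holds for only finitely many primes $p$, which will force $c(p)$ to fail to be integral for all but finitely many $p$, and in particular to be non-integral whenever it is non-zero for such $p$ — and separately handle the case $c(p) = 0$.

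Concretely, I would argue as follows. First, dispose of the (possibly empty) finite set of primes $p \mid N$; for the rest we may use the equidistribution machinery of the previous sections. By Deligne's bound $|b(p)| \le 2\sqrt{p}$ at each archimedean place; if $K_f \ne \Q$ one gets the analogous bound $|\sigma(b(p))| \le 2\sqrt{p}$ for each embedding $\sigma$, since the Galois conjugates of $g$ are again normalized eigenforms. Now suppose $c(p)$ is integral, i.e. $c(p) \in \mcO_{K_f}$; then $p\,c(p) = 1 - b(p) \in \mcO_{K_f}$ is divisible by $p$ in $\mcO_{K_f}$, so $1 - b(p) \equiv 0$ modulo every prime above $p$. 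Applying this in each conjugate we find each $\sigma(b(p)) \equiv 1 \pmod{\mfp'}$ for the appropriate primes, hence the algebraic integer $N_{K_f/\Q}(b(p) - 1)$ — or more simply $\prod_{\sigma}(\sigma(b(p)) - 1)$ — is a rational integer divisible by $p$. But by the bound above each factor satisfies $|\sigma(b(p)) - 1| \le 2\sqrt{p} + 1$, so $\big|\prod_\sigma(\sigma(b(p)) - 1)\big| \le (2\sqrt{p}+1)^{[K_f:\Q]}$; for this nonzero (if it is nonzero) rational integer to be divisible by $p$ is no contradiction in general, so I need a sharper input. The fix is to observe that $c(p) = 0$ exactly when $b(p) = 1$, i.e. when $\prod_\sigma(\sigma(b(p))-1) = 0$; if instead $c(p) \ne 0$ and integral, then the nonzero rational integer $\prod_\sigma(\sigma(b(p))-1)$ is divisible by $p^{[K_f:\Q]}$ (one factor of $p$ from each conjugate relation), giving $p^{[K_f:\Q]} \le (2\sqrt{p}+1)^{[K_f:\Q]}$, i.e. $p \le (2\sqrt p + 1)$, which fails for all $p \ge 7$. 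Hence $c(p)$ non-zero and integral forces $p$ to lie in the finite set $\{p : p \mid N\} \cup \{2,3,5\}$, which proves the theorem.

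The main obstacle I anticipate is making the "divisible by $p$ in $\mcO_{K_f}$" step honest: one must be careful that $p\,c(p) \in p\,\mcO_{K_f}$ really does imply $b(p) - 1 \equiv 0$ modulo each prime $\mfp \mid p$ with the right multiplicity, and that taking the product over embeddings lands one a genuine rational integer divisible by $p^{[K_f:\Q]}$ rather than by a smaller power (ramification of $p$ in $K_f$ must be tracked, or circumvented by working with $N_{K_f/\Q}(p\,c(p)) = p^{[K_f:\Q]} N_{K_f/\Q}(c(p))$ directly, which is cleanest). Once the norm is written as $N_{K_f/\Q}(b(p) - 1)$ and bounded via the Deligne estimate on all conjugates, the size contradiction is immediate. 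For the consequence about poor behavior, I would simply remark that Theorem~\ref{main-thm} (or~\ref{main-thm-2}) already shows $c(p) \ne 0$ for a density-one set of primes, so combined with the present theorem we conclude $c(p) \notin \mcO_{K_f}$ for all but finitely many $p$ — the $q$-exponents are essentially never integral.
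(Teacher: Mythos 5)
Your proposal is correct and follows essentially the same route as the paper: from $b(p)=1-pc(p)$, integrality and non-vanishing of $c(p)$ give $p^{d}\le |N_{K_f/\Q}(b(p)-1)|\le (2\sqrt{p}+1)^{d}$ via Deligne's bound on all conjugates, which can hold for only finitely many $p$. Your remarks about handling ramification via $N_{K_f/\Q}(p\,c(p))=p^{d}N_{K_f/\Q}(c(p))$ and discarding $p\mid N$ are sensible refinements but do not change the argument.
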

\begin{proof}

Assume the contrary, i.e., there exists infinitely many primes $p$ such that $c(p)$ is non-zero and integral.
Let $S$ denote that infinite set of primes. 

Recall that $g(z) = \sum_{n=1}^{\infty} b(n) q^n$ with $b(1)=1$. For $p \in S$, $c(p) \in \mathcal{O}_{K_f}-\{0\}$. 
Therefore, $p$ divides $b(p)-1$ in $\mathcal{O}_{K_f}$ for all $p \in S$. Now, we show that this cannot happen.

Suppose $[K_f:\Q]=d$.
For $p \in S$, if $p$ divides $b(p)-1$ in $\mathcal{O}_{K_f}$, then 
$$ p^d \leq |N_{K_f/\Q}(b(p)-1)| \leq  (2\sqrt{p}+1)^d,$$
where $N_{K_f/\Q}$ is the Norm map from $K_f$ to $\Q$. The latter inequality follows from $|b(p)| \leq 2 \sqrt{p}$. 
In particular, this implies that, for all $p \in S$,
\begin{equation}
\label{integral-exponents}
 \left( \frac{p}{\sqrt{p}+1} \right)^d  \leq 2^d. 
\end{equation}
The above inequality can only hold for finitely many primes, which is a contradiction to 
the fact that $S$ is infinite.
\end{proof}
Recall that, $\mathbb{P}_0=\{p \in \mathbb{P} | c(p) =0 \}$. The above theorem implies that, if the set $\mathbb{P}_0$ is finite, 
then $c(p)$ is integral only for finitely many primes $p$. But, we only know that the density of $\mathbb{P}_0$ is zero, 
hence we have: 
\begin{cor}
\label{main-cor-2}
Let $f$ be a non-constant GMF of weight $0$ on $\Gamma_0(N)$ with $div(f) = 0$. 
Then $c(p) \not \in \mathcal{O}_{K_f}$ for a density $1$ set of primes $p \in \mathbb{P}$.
In particular, if all prime $q$-exponents $c(p)$ are in $\mcO_{K_f}$,
then $f=1$ is constant.
\end{cor}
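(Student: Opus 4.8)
The plan is to deduce Corollary~\ref{main-cor-2} directly from Theorem~\ref{main-thm-2-application} together with the density statements already established in Theorems~\ref{main-thm} and~\ref{main-thm-2}. First I would observe that Theorem~\ref{main-thm-2-application} tells us there is a \emph{finite} set $T$ of primes outside which $c(p)$ is either zero or non-integral; equivalently, $\{p \in \mathbb{P} : c(p) \in \mcO_{K_f}\} \subseteq \mathbb{P}_{=0} \cup T$. Since $T$ is finite it has density $0$, and by Theorems~\ref{main-thm} and~\ref{main-thm-2} (covering the non-CM and CM cases respectively) we know $d(\mathbb{P}_{=0}) = 0$. Hence the set of primes $p$ with $c(p) \in \mcO_{K_f}$ is contained in a density-$0$ set, so its complement — the set of primes with $c(p) \notin \mcO_{K_f}$ — has density $1$. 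This gives the first assertion.

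For the ``in particular'' clause I would argue contrapositively. If $c(p) \in \mcO_{K_f}$ for \emph{every} prime $p$, then in particular the density-$1$ set of primes with $c(p) \notin \mcO_{K_f}$ must be empty, which contradicts the first part unless the hypothesis ``$f$ non-constant'' of Theorem~\ref{main-thm-2-application} fails. A small point to address: Theorem~\ref{main-thm-2-application} and the density theorems are stated for $g$ a normalized Hecke eigenform, and the CM/non-CM dichotomy must cover all such $g$ arising as a weight-$2$ logarithmic derivative; one should note that every newform of weight $2$ either has CM or does not, and in the CM case the relevant ones correspond to CM elliptic curves over $\Q$ (or one invokes the remark in the text that the same proof works for CM forms once the equidistribution statement is granted). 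So modulo that bookkeeping, if all $c(p) \in \mcO_{K_f}$ then $f$ cannot be non-constant; a non-constant $f$ with $\mrm{div}(f)=0$ has $h = 0$ forced and the normalization $c(1) = -1$ then pins down $f = 1$ up to the scalar $c$, and in fact the only constant GMF compatible with the product expansion~\eqref{infinite-expansion} having all trivial exponents is $f \equiv 1$.

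The main obstacle is essentially nonexistent here — this is a soft corollary — but if I had to name the one place requiring care, it is making sure the ``constant'' conclusion is phrased correctly: Theorem~\ref{main-thm-2-application} only rules out \emph{non-constant} $f$, so one must separately check that a constant GMF has all $q$-exponents equal to $0$ (trivially integral) and identify that constant as $1$, using $\mrm{div}(f) = 0$ to kill the leading power $q^h$ and the convention $c(1) = -1$ built into~\eqref{log-derivative} via $b(1) = 1$. Thus the contrapositive is clean: all $c(p) \in \mcO_{K_f}$ $\Rightarrow$ the density-$1$ exceptional set is empty $\Rightarrow$ Theorem~\ref{main-thm-2-application}'s non-constancy hypothesis must be violated $\Rightarrow$ $f$ is constant $\Rightarrow$ $f = 1$. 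I would write this up in three or four lines, citing Theorem~\ref{main-thm-2-application} for the finiteness and Theorems~\ref{main-thm} and~\ref{main-thm-2} for $d(\mathbb{P}_{=0}) = 0$.
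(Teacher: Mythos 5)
Your argument is correct and follows essentially the same route as the paper's own proof: you combine Theorem~\ref{main-thm-2-application} (only finitely many primes have $c(p)$ non-zero and integral) with the fact $d(\mathbb{P}_{=0})=0$ supplied by Theorems~\ref{main-thm} and~\ref{main-thm-2}, so the primes with $c(p)\in\mathcal{O}_{K_f}$ lie in a density-zero set and the complement has density~$1$. Your additional bookkeeping on the ``in particular'' clause (the non-CM/CM dichotomy and identifying the constant as $1$) is a bit more explicit than the paper, but the underlying argument is identical.
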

\begin{proof}
By Theorem~\ref{main-thm} and Theorem~\ref{main-thm-2}, we see that the density of primes $p$
for which $c(p)=0$ is zero, i.e., $c(p)$ is non-zero for a density $1$ set of primes $p \in \mathbb{P}$.
Now, the corollary follows follows from the above Theorem.
\end{proof}

We finish this article with the following Corollary, which gives an another proof of~\cite[Thm. 1]{KM08-N}.
\begin{cor}[Kohnen-Mason]
\label{exponent-thm-2}
Let $f =\sum_{n=0}^{\infty} a(n) q^n$ be a GMF of weight $0$ and level $N$ with $div(f) = 0$.
Suppose $g$ $($as in~\eqref{log-derivative}$)$ is a normalized Hecke eigenform. 
If $a(0)=1$ and $a(n)\in \Z$ for $n \in \N$, then $f = 1$ is constant.
\end{cor}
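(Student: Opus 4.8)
The plan is to reduce the statement to Corollary~\ref{main-cor-2}, by relating the ordinary Fourier coefficients $a(n)$ of $f$ to the $q$-exponents $c(n)$ appearing in the infinite product expansion~\eqref{infinite-expansion}. First I would observe that, since $a(0) = 1$, the order $h$ of $f$ at infinity is $0$ and the leading constant $c$ in~\eqref{infinite-expansion} equals $1$, so $f(z) = \prod_{n=1}^{\infty}(1-q^n)^{c(n)}$ with $f = \sum_{n \geq 0} a(n) q^n$, $a(0) = 1$. Taking the logarithmic derivative (equivalently, using Remark~\ref{key-equation}), one has the standard Newton-type recursion expressing $c(n)$ in terms of $a(1), \dots, a(n)$ with rational coefficients; conversely $a(n)$ is a polynomial in the $c(m)$ ($m \leq n$) with integer coefficients. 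The precise shape is not needed — what matters is the direction expressing $c(n)$ through the $a(m)$.

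The key arithmetic step is: if $a(n) \in \Z$ for all $n$, then $c(p) \in \Z_{(p)}$ for every prime $p$, i.e. $c(p)$ has no $p$ in its denominator. Indeed, comparing the $q^p$-coefficient on both sides of $\sum_{n \geq 0} a(n) q^n = \prod_{m=1}^\infty (1-q^m)^{c(m)}$, and expanding the product, every term that contributes to the $q^p$-coefficient and involves $c(p)$ contributes exactly $-c(p)$ (from the factor $(1-q^p)^{c(p)} = 1 - c(p) q^p + \cdots$), while all other contributions to $a(p)$ are $\Z$-polynomials in $c(1), \dots, c(p-1)$. By induction on $p$ (more precisely, on all $n$, showing $c(n) \in \tfrac{1}{n}\Z[c(1),\dots,c(n-1)]$ and hence, after clearing, that the denominators of $c(n)$ are supported on primes $\leq n$), one finds that $c(p) \equiv -a(p) \pmod{\text{stuff in } \Z[c(1),\dots,c(p-1)]}$ up to a unit, so in any case $c(p)$ lies in the localization $\Z_{(p)}$ — its denominator is prime to $p$. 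Since also $c(p) \in K_f$, and $\mcO_{K_f} \cap \Z_{(p)}$ at least contains $c(p)$ after we note $c(p)$ is a $\Z$-linear combination (with denominators prime to $p$) of algebraic integers $a(m)$, we get that $c(p)$ is integral at every prime of $\mcO_{K_f}$ above $p$.

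Now I would invoke Corollary~\ref{main-cor-2}: under the hypothesis that $g$ is a normalized Hecke eigenform, the set of primes $p$ with $c(p) \in \mcO_{K_f}$ has density $0$ unless $f$ is constant. But we have just shown that, for \emph{every} prime $p$, the element $c(p)$ is integral at all places above $p$; combined with the bound argument in the proof of Theorem~\ref{main-thm-2-application} — which shows $c(p)$ non-zero and integral at the primes above $p$ forces $(p/(\sqrt p + 1))^d \leq 2^d$, impossible for large $p$ — we conclude $c(p) = 0$ for all but finitely many primes $p$. Then $b(p) = 1 - p\,c(p) = 1$ for all but finitely many $p$, forcing $g$ (a normalized eigenform) to have $b(p) = 1$ generically, which contradicts Deligne's bound $|b(p)| \leq 2\sqrt p$ only in the opposite direction — rather, one argues: an eigenform with $b(p) = 1$ for almost all $p$ would give a density-$1$ set of primes with $c(p)$ integral, so by Corollary~\ref{main-cor-2}, $f$ is constant, i.e. $f = 1$.

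The main obstacle is the bookkeeping in the arithmetic step: making rigorous the claim that integrality of all $a(n)$ forces $c(p)$ to be $p$-integral. This requires a careful induction unwinding the exponential/logarithm relation between $\sum a(n)q^n$ and $\sum c(n)\log(1-q^n)^{-1}$, tracking exactly which primes can appear in denominators — the classical fact being that $c(n)$ has denominator dividing $n$ times a product of earlier denominators, so in particular the "new" prime introduced at level $p$ is only $p$ itself, and the coefficient of $c(p)$ in $a(p)$ is a unit. Once that is in place, the rest is a direct citation of Theorem~\ref{main-thm-2-application} / Corollary~\ref{main-cor-2}.
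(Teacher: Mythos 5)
Your proposal follows essentially the same route as the paper's proof: deduce from $a(0)=1$ and $a(n)\in\Z$ an integrality statement for the product exponents — the paper, noting $K_f=\Q$, simply cites the proof of Kohnen--Mason \cite[Thm. 1]{KM08-N} for the stronger fact that $c(n)\in\Z$ for \emph{all} $n$ — and then conclude by Corollary~\ref{main-cor-2}. One caution about your weaker $\Z_{(p)}$-bookkeeping: in the coefficient-of-$q^p$ comparison the contributions from earlier factors involve $\binom{c(m)}{j}$ with $j$ as large as $p$ (e.g.\ from the $m=1$ factor), and these are $p$-integral only when the earlier $c(m)$ are genuine integers, so the clean induction is the one yielding $c(n)\in\Z$ outright; once you have that, every $c(p)$ lies in $\mcO_{K_f}$ and the final clause of Corollary~\ref{main-cor-2} finishes the argument directly, making the detour through the norm bound and $b(p)=1$ unnecessary.
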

\begin{proof}
Since $K_f=\Q$, one can show that if $a(0)=1$ and $a(n)\in \Z$ for all $n \in \N$, 
then $c(n) \in \Z$ for all $n \in \N$ (cf. the proof of~\cite[Thm. 1]{KM08-N}).
In particular, $c(p) \in \Z$ for all $p$, which is a contradiction to the above Corollary.
\end{proof}

\section{Acknowledgments}
The author would like to thank the Mathematics Center Heidelberg  for supporting his postdoctoral stay at Heidelberg,
where most of this work was carried out, Prof. G. B\"ockle for making some useful comments on earlier drafts, and 
Prof. W. Kohnen for providing the manuscripts~\cite{KM08},~\cite{KM11}. 

\bibliographystyle{plain, abbrv}

\end{document}